\documentclass[11pt]{article}

\usepackage{amssymb}
\usepackage{amsthm}
\usepackage{mathtools}
\usepackage{mathrsfs}
\usepackage{stmaryrd}
\usepackage{xspace}
\usepackage{xcolor}
\usepackage{tikz}

\usepackage[
  letterpaper,
  left=1in,
  right=1in,
  top=1in,
  bottom=1in
]{geometry}

\usepackage[hidelinks]{hyperref}
\usepackage{cleveref}

\hypersetup{
  pdftitle={Real-analytic finitely forcible kernels},
  pdfauthor={Junchi Zhang}
}

\newcommand{\bemph}[1]{{\normalfont#1}}
\newcommand{\ep}[1]{\bemph{(}#1\bemph{)}}

\newtheoremstyle{bfnote}%
{}{}%
{\slshape}{}%
{\bfseries}{\bfseries.}%
{ }%
{\thmname{#1}\thmnumber{ #2}\thmnote{ \ep{\normalfont{}#3}}}

\theoremstyle{bfnote}
\newtheorem{thm}{Theorem}[section]
\newtheorem*{thm*}{Theorem}
\newtheorem{prop}[thm]{Proposition}
\newtheorem{lem}[thm]{Lemma}

\newtheorem*{coro*}{Corollary}

\theoremstyle{definition}

\newtheorem*{dfn*}{Definition}

\newtheorem*{exmp*}{Example}

\theoremstyle{remark}
\newtheorem*{ques*}{Question}
\newtheorem*{rem*}{Remark}

\crefname{thm}{theorem}{theorems}
\Crefname{thm}{Theorem}{Theorems}
\crefname{lem}{lemma}{lemmas}
\Crefname{lem}{Lemma}{Lemmas}
\crefname{clm}{claim}{claims}
\Crefname{clm}{Claim}{Claims}
\crefname{prop}{proposition}{propositions}
\Crefname{prop}{Proposition}{Propositions}
\crefname{coro}{corollary}{corollaries}
\Crefname{coro}{Corollary}{Corollaries}
\crefname{dfn}{definition}{definitions}
\Crefname{dfn}{Definition}{Definitions}
\crefname{rem}{remark}{remarks}
\Crefname{rem}{Remark}{Remarks}
\crefname{alg}{algorithm}{algorithms}
\Crefname{alg}{Algorithm}{Algorithms}

\newcommand*{\myproofname}{Proof}

\makeatletter
\newcommand{\neutralize}[1]{\expandafter\let\csname c@#1\endcsname\count@}
\makeatother

\renewcommand{\d}{\mathrm{d}}

\newcommand{\R}{\mathbb{R}}

\newcommand{\defeq}{\coloneqq}

\newcommand{\emphd}[1]{{\fontseries{b}\selectfont\textsf{#1}}}
\newcommand{\W}{\mathcal{W}}

\newcommand{\one}{\mathbf{1}}
\newcommand{\inp}[1]{\langle #1 \rangle}

\DeclareMathOperator{\tr}{tr}
\numberwithin{equation}{section}

\begin{document}

\title{Real-analytic finitely forcible kernels}
\date{}
\author{
  Junchi Zhang
}

\maketitle
\vspace{-2em}

\begin{abstract}
Lov\'asz and Szegedy asked whether a nonconstant continuous, or even smooth,
finitely forcible kernel exists.  For every $0<\lambda\leq1/128$, we
construct an explicit nonconstant real-analytic kernel $W_\lambda$ taking values
in $(1/4,7/8)$.  A single finite family of simple graphs, independent of
$\lambda$, forces every $W_\lambda$ among all bounded symmetric real-valued kernels on $[0,1]^2$.
\end{abstract}

\section{Introduction}

Graph limit theory provides a framework for recovering global structure from local statistics and brings probabilistic and analytic tools to the study of large graphs.
The adjacency matrix of a finite graph may be
viewed as a black-and-white pixel image.  Along a convergent sequence of dense graphs whose number of vertices tends to infinity, these images have increasing resolution and after suitable relabellings of the vertices, their limiting profile may be represented by a function on $[0,1]^2$.  This limit object is called a \emph{graphon} \cite{LovaszSzegedy2006,BorgsChayesLovaszSosVesztergombi2008,Lovasz2012}.

Formally, a \emphd{kernel} is a bounded symmetric measurable function from $[0,1]^2$ to $\R$, and a \emphd{graphon} is a kernel taking values in $[0,1]$.
We use $\W$ to denote the space of kernels and $\W_0$ the space of graphons.
Throughout, kernels that agree almost everywhere are identified.
For a kernel $W$ and a finite simple graph $F$, its \emphd{homomorphism density} in $W$ is
\[
    t(F,W)
    =
    \int_{[0,1]^{V(F)}}
    \prod_{ij\in E(F)}W(x_i,x_j)
    \prod_{i\in V(F)}\,\d x_i.
\]
Two kernels are \emphd{weakly isomorphic} if their homomorphism densities agree
for every finite simple graph \cite[Section 7.3]{Lovasz2012}.

An element $W\in \mathcal{X}$, $\mathcal{X}=\W$ or $\W_0$, is called \emphd{finitely forcible} in $\mathcal{X}$ if there are finite simple graphs
$F_1,\ldots,F_m$ such that every $U\in \mathcal{X}$ satisfying
\[
    t(F_i,U)=t(F_i,W),
    \qquad i=1,\ldots,m,
\]
is weakly isomorphic to $W$.
Finitely forcible graphons arise naturally as
unique solutions of extremal problems involving finitely many subgraph
densities; see \cite{CooperKralMartins2018,GrzesikKral2019}.

Classical quasirandomness results show that constant graphons are finitely forcible in $\W_0$ \cite{Thomason1987,ChungGrahamWilson1989};
Lov\'asz and S\'os proved the same for every step graphon \cite{LovaszSos2008};
Grzesik, Kr\'{a}l', and Pikhurko extended this result to $\W$ \cite{GrzesikKralPikhurko2024}.
Later, Lov\'asz and Szegedy initiated the systematic study
of finite forcibility and constructed non-step examples of algebraic and iterated type, including examples with continuum-sized range \cite{LovaszSzegedy2011}.
They also proved that every finitely forcible graphon is either a step graphon or has infinite rank
\cite[Corollary~7.7]{LovaszSzegedy2011}.

Later constructions showed that finitely forcible graphons may have spaces of typical vertices that are not locally compact or are infinite-dimensional \cite{GlebovKralVolec2019,GlebovKlimosovaKral2019},
and may have weak-regularity partitions of nearly extremal complexity \cite{CooperKaiserKralNoel2018}.
More generally, every graphon occurs on a positive-measure part of a finitely forcible graphon \cite{CooperKralMartins2018},
and that part may have measure arbitrarily close to one \cite{KralLovaszNoelSosnovec2020}.
These results show that finitely forcible graphons can be quite complex, but do not give a nonconstant finitely forcible graphon that is continuous on the whole square.

Lov\'asz and Szegedy asked whether such a nonconstant
continuous, or even smooth, finitely forcible function on $[0,1]^2$ exists \cite[Question~1]{LovaszSzegedy2011}.
We answer this question in the affirmative, in the stronger real-analytic form.
A kernel is called \emphd{real analytic} if it extends to a real-analytic function on an open neighborhood of $[0,1]^2$.
Our main result is the following.

\begin{thm}\label{thm:main}
For $0<\lambda\leq1/128$, define
\[
    W_\lambda(x,y)=
    \frac12+\frac{\cos(\pi x)+\cos(\pi y)}8
    +\frac{\cos(\pi x)\cos(\pi y)}{16}+\sum_{n=2}^{\infty} \lambda^n\cos(n\pi x)\cos(n\pi y).
\]
Then $W_\lambda$ is a nonconstant real-analytic finitely forcible kernel in $\W$. Moreover, the kernels
$W_\lambda$ are pairwise non-weakly-isomorphic as $\lambda$ varies.
\end{thm}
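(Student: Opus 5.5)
The plan is to exploit the fact that $W_\lambda$ is a function of the single coordinate $g(x)\defeq\cos(\pi x)$, and that its spectral data obey the Chebyshev three-term recurrence. Write $u=g(x)$, $v=g(y)$ and $\cos(n\pi x)=T_n(u)$. The law of $u$ under Lebesgue measure is the arcsine law $\mu$. The functions $1$ and $\sqrt2\,T_n(u)$ are orthonormal in $L^2(\mu)$, and
\[
W_\lambda(x,y)=F(u,v),\qquad F(u,v)=\tfrac12+\tfrac{u+v}{8}+\tfrac{uv}{16}+\sum_{n\ge2}\lambda^nT_n(u)T_n(v).
\]

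\textbf{The easy parts.}
\begin{itemize}
\item Real analyticity: on $|\operatorname{Im} z|<\delta$ we have $|\cos(n\pi z)|\le e^{n\pi\delta}$. Since $\lambda\le1/128$, the series converges uniformly on a complex neighbourhood of $[0,1]^2$ for small $\delta$.
\item Values in $(1/4,7/8)$: the tail is bounded by $\lambda^2/(1-\lambda)$, which is negligible.
\item Nonconstancy is clear.
\item Pairwise non-weak-isomorphism: $t(C_k,W_\lambda)=\sum_j\mu_j^k$ is the $k$-th power sum of the spectrum. The spectrum consists of the two eigenvalues of the $2\times2$ block on $\mathrm{span}\{1,g\}$ (independent of $\lambda$), together with $\lambda^n/2$ for $n\ge2$. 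So the cycle densities recover $\lambda$.
\end{itemize}

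\textbf{Step 1: rooted identities satisfied by $W_\lambda$.} The degree function is $d(x)=\int W_\lambda(x,y)\,\d y=\tfrac12+g(x)/8$, so the coordinate $g=8(d-\tfrac12)$ is expressible through degrees. I would record a finite list of identities in rooted kernels built from $W$ and $d$:
\begin{itemize}
\item (a) the moments $\int d^k$ for $k\le2$, together with $\int W(x,y)d(y)\,\d y$ as a polynomial in $d(x)$;
\item (b) a recurrence identity encoding $2u\,T_n(u)=T_{n+1}(u)+T_{n-1}(u)$.
\end{itemize}
For (b), let $K=W-(\text{the rank-two part})$, which is a polynomial in $W$, $d(x)$, $d(y)$. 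Let $M$ be multiplication by $g$ and $\Lambda=\operatorname{diag}(\lambda^n)$ on $\{T_n\}$. Then $K=\Lambda$ on $\mathrm{span}\{T_n:n\ge2\}$. The recurrence gives an operator identity of the form
\[
\lambda(MK+KM)-(K^2\text{-type terms})=\text{finite-rank correction}.
\]
Concretely, $\lambda\bigl(g(x)+g(y)\bigr)K(x,y)$ can be written as $\lambda^{-1}\int K(x,z)\,2g(z)\,K(z,y)\,\d z$ plus boundary terms involving $T_1$ and $T_2$. I would compute this exactly, using the closed form $\sum_n\lambda^n\cos n\alpha\cos n\beta=\tfrac12\bigl(P(\alpha+\beta)+P(\alpha-\beta)\bigr)$, where $P$ is the Poisson kernel. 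Each identity has the form $\Phi(x,y)=0$, with $\Phi$ a polynomial in rooted densities. Then $\int\Phi^2=0$ expands into a finite linear combination of homomorphism densities. Forcing these values (and the corresponding $\int\Phi^2$ for $U$) yields $\Phi_U\equiv0$ a.e. for any kernel $U$ matching the finite list. Since $U$ is only assumed bounded and real valued, I would only use this sum-of-squares device, never positivity of $U$.

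\textbf{Step 2: uniqueness.} Let $U\in\W$ satisfy the forced densities, and set $h=8(d_U-\tfrac12)$. Let $\nu$ be the law of $h$. The forced identities say three things:
\begin{itemize}
\item $U$ acts on $\mathrm{span}\{1,h\}$ by the same $2\times2$ matrix as $W_\lambda$;
\item the remainder $K_U$ satisfies the same three-term operator recurrence;
\item the spectral data of $K_U$ are $\lambda^n$.
\end{itemize}
Starting from $h$ and applying the recurrence, I would show inductively that the eigenfunctions $\phi_n$ of $K_U$ with eigenvalue $\lambda^n/2$ are exactly $\sqrt2\,T_n(h)$, and that they are orthonormal in $L^2$. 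Thus $\{T_n\}$ are orthogonal polynomials for $\nu$ with the Chebyshev Jacobi matrix. Since $\nu$ is compactly supported, it is determined by this Jacobi matrix, so $\nu=\mu$. One must also check that $K_U$ has no further spectrum. This follows by comparing $\int K_U^2=\sum_n\lambda^{2n}/2$, which is forced via a $C_4$/$C_2$-type density, against the contribution of the identified eigenvalues. Hence $U(x,y)=F(h(x),h(y))$ with $h\sim\mu$, which is a measure-preserving pullback of $W_\lambda$, so $U$ is weakly isomorphic to $W_\lambda$.

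\textbf{Expected obstacle.} The main obstacle is Step 1(b). One has to find a single finite identity that genuinely pins down the whole Chebyshev recurrence, including the nonstandard low-order terms coming from the coefficients $1/8$ and $1/16$. It also has to be strong enough in Step 2 to rule out extra eigenfunctions of $U$ orthogonal to the polynomials in $h$. My first attempt would be to force the identity relating $\int K(x,z)g(z)K(z,y)\,\d z$ to $\lambda\bigl(g(x)+g(y)\bigr)K(x,y)$. I would then use the Hilbert–Schmidt norm count above to exclude any leftover spectrum. The condition $\lambda\le1/128$ is presumably what keeps the low-order correction terms nondegenerate and the range in $(1/4,7/8)$.
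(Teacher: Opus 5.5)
Your easy parts and Step~1(a) (the relation $r_U=\frac{17}{32}d_U-\frac1{128}$) match the paper. However, there is a genuine gap. The identity that is supposed to carry the proof is never found, the candidate you write down is false, and Step~2 assumes its own conclusion.

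\textbf{The proposed identity is false.} In the basis $e_n=\sqrt2\cos(n\pi x)$ one has $Ke_n=\frac{\lambda^n}{2}e_n$ and $Me_n=\frac12(e_{n-1}+e_{n+1})$ for $n\ge2$. So the $(n+1,n)$ entry of $\lambda(MK+KM)$ is $\lambda^{n+1}(1+\lambda)/4$, while that of $2\lambda^{-1}KMK$ is $\lambda^{2n}/4$. More generally, off-diagonal entries of terms linear in $K$ with first-order multipliers decay like $\lambda^n$, and those of terms quadratic in $K$ decay like $\lambda^{2n}$. Hence no relation of your shape holds modulo finite rank. A correct closed form that is linear in $K$ needs a degree-two multiplier, namely $(1-2\lambda\cos\pi(x-y)+\lambda^2)(1-2\lambda\cos\pi(x+y)+\lambda^2)=V_\lambda(\cos\pi x,\cos\pi y)$.

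\textbf{Applying such an identity to $U$ itself still fails, in two ways.} First, $K(x,y)$ contains the edge joining the two roots, so $\int\Phi^2$ contains $U(x,y)^2$. That is a double edge, not a simple graph; the same objection applies to your $C_2$ count. Second, for a competitor, $h=8d_U-4$ is only known to be bounded. The relation $V_\lambda(h(x),h(y))K_U(x,y)=2N_\lambda(h(x),h(y))$ determines $K_U$ only where $V_\lambda\neq0$. Where $|h|>a_\lambda=\frac{1+\lambda}{2\sqrt\lambda}$, it yields a kernel that is negative near the diagonal. Excluding that requires a positivity argument, which you have rightly given up, since $T_U$ need not be positive.

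\textbf{Step~2 does not repair this.} The claim that ``the spectral data of $K_U$ are $\lambda^n$'' cannot be forced by finitely many densities, which give only finitely many power sums; it must come out of the argument. The induction identifying $\sqrt2T_n(h)$ as normalized eigenfunctions is not supplied. A recurrence linking neighbouring eigenspaces gives neither $\int T_{2n+2}(h)=0$ (needed for $\|\sqrt2T_{n+1}(h)\|_2=1$) nor one-dimensionality of the eigenspaces. Eigenvectors for $\lambda^{n-1}/2$ that are orthogonal to the polynomials in $h$ can couple to $T_{n+1}(h)$.

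\textbf{The missing idea is to force the codegree $C_U$, the kernel of $T_U^2$, with parameter $z=\lambda^2$.} This fixes each problem above in turn.
\begin{itemize}
\item The path $P_3^{12}$ has no edge between the roots. So squaring $V_{\lambda^2}(D_U,D_U)P_3^{12}-I_{\lambda^2}(D_U,D_U)$ produces only simple graphs.
\item Once the $r_U$-identity splits $T_U=B_0\oplus A$, the remainder $C_U(x,y)-L(D_U(x),D_U(y))$ represents $0\oplus A^2$. This operator is positive for every real kernel $U$.
\item That positivity, together with boundedness, keeps the essential range of $D_U$ inside $(-a_{\lambda^2},a_{\lambda^2})$. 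Hence $C_U=R_{\lambda^2}(D_U(x),D_U(y))$ with a uniformly convergent Chebyshev series.
\item Integrating over $[0,1]^2$ and using $t(P_3,U)=\frac{33}{128}$ forces $\frac12\sum_{n\ge2}\lambda^{2n}m_n^2=0$, where $m_n$ are the Chebyshev moments of $D_U$. So $D_U$ has the arcsine law, with no eigenfunction induction or multiplicity count needed.
\item Finally, $t(C_3,\cdot)$ and the positivity of $T_{W_\lambda}$ recover $U$ from $T_U^2$. This also removes the sign ambiguity that any identity quadratic in $U$ leaves.
\end{itemize}
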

The constraint $\lambda\leq 1/128$ is a convenient way to ensure some inequalities in the proof, and is not optimized. For every $\lambda$ in the stated range,
\[
    \frac14<W_\lambda(x,y)<\frac78,
    \qquad x,y\in[0,1],
\]
so these kernels are graphons bounded away from both $0$ and $1$.
The proof produces a finite family $\mathscr F$ of simple graphs, independent of $\lambda$, such that the densities
\[
    t(F,U)=t(F,W_\lambda),
    \qquad F\in\mathscr F,
\]
determine $W_\lambda$.
More precisely,
there is a measure-preserving map
$\varphi\colon[0,1]\to[0,1]$ such that
\[
    U(x,y)=W_\lambda(\varphi(x),\varphi(y))
\]
for almost every $(x,y)\in[0,1]^2$.

\paragraph{Organization.}
Section~2 introduces basic facts about kernels, integral operators, and labelled graphs; Section~3 studies properties of $W_\lambda$;
Section~4 constructs the finite forcing family and proves \Cref{thm:main}.

\section{Preliminaries}

\subsection{Kernels and operators}

We use $\operatorname{Leb}$ to denote Lebesgue measure on $[0,1]$.
All $L^2$ spaces in this paper are over $\R$; in particular, $L^2[0,1]$
is a real Hilbert space.
A measurable map $\varphi\colon[0,1]\to[0,1]$ is
\emphd{measure preserving} if for every measurable set $A\subseteq[0,1]$,
\[
 \operatorname{Leb}(\varphi^{-1}(A))=\operatorname{Leb}(A).
\]
For a kernel $W$, its \emphd{pullback} by $\varphi$ is the kernel
\[
 W^\varphi(x,y)=W(\varphi(x),\varphi(y)).
\]
For every finite simple graph $F$, applying measure preservation in each vertex coordinate gives
\[
 t(F,W^\varphi)=t(F,W).
\]
Hence, $W^\varphi$ is weakly isomorphic
to $W$.  More generally, by \cite[Corollary~10.35(a)]{Lovasz2012}, two kernels $U$ and $W$ are weakly isomorphic if and only if there exist measure-preserving maps
$\varphi,\psi\colon[0,1]\to[0,1]$ such that
\[
 U^\varphi(x,y)=W^\psi(x,y),
 \qquad\text{a.e. }(x,y)\in[0,1]^2.
\]

For a kernel $W$, we define
\[
    d_W(x)=\int_0^1W(x,y)\,\d y,
    \qquad
    C_W(x,y)=\int_0^1W(x,z)W(z,y)\,\d z,
    \qquad
    r_W(x)=\int_0^1C_W(x,y)\,\d y.
\]
The integral operator associated with $W$ is
\[
    (T_Wf)(x)=\int_0^1W(x,y)f(y)\,\d y,
    \qquad f\in L^2[0,1].
\]
Further, if an operator $T\colon L^2[0,1]\to L^2[0,1]$ is represented in this way by a kernel $W$, then $W$ is called an \emphd{integral kernel} representing $T$.

The following standard facts about operators may be found in
\cite[Chapters~1--3]{Simon2005}; see also
\cite[Section~7.5]{Lovasz2012}.
Given an orthonormal basis $(e_n)_{n=0}^\infty$ for $L^2[0,1]$, an operator $T\colon L^2[0,1]\to L^2[0,1]$ is called
\emphd{Hilbert--Schmidt} if
\[
    \|T\|_{\mathrm{HS}}^2
    \defeq
    \sum_{n=0}^\infty\|Te_n\|_2^2
    <\infty;
\]
this condition and the value of the sum are independent of the chosen basis.
For an integral operator $T_K$ associated with a kernel $K$, the Hilbert--Schmidt condition is equivalent to $K\in L^2([0,1]^2)$, and
$\|T_K\|_{\mathrm{HS}}=\|K\|_{L^2}$.
A Hilbert--Schmidt operator is compact.

An operator $T\colon L^2[0,1]\to L^2[0,1]$ is called \emphd{self-adjoint} if $\inp{Tf,g}=\inp{f,Tg}$ for all $f,g \in L^2[0,1]$.
A self-adjoint operator $T$ is called \emphd{positive} if
$\langle Tf,f\rangle\geq0$ for every $f\in L^2[0,1]$, and it is called \emphd{injective} if its
null space is $\{0\}$.

We call $\alpha$ an \emphd{eigenvalue} of $T$ if $Tf=\alpha f$ for some nonzero $f\in L^2[0,1]$.
If $T$ is compact and self-adjoint, its nonzero eigenvalues form a finite or countable sequence $(\alpha_i)_{i\in I}$, each repeated according to its
finite multiplicity. Thus the
operator $T$ is Hilbert--Schmidt if and only if
$\sum_{i\in I} \alpha_i^2<\infty$.

A compact self-adjoint operator $T$ is called \emphd{trace class} if
$\sum_{i\in I} |\alpha_i|<\infty$. 
Thus if $T$ is trace class, then
\[
 \tr(T)\defeq \sum_{n=0}^\infty \inp{Te_n,e_n} =\sum_{i\in I}\alpha_i,
\]
and this series is absolutely convergent. In particular, the product of two Hilbert--Schmidt self-adjoint operators is trace
class.

If $T$ is positive and bounded, then $T$ has a unique positive square root $T^{1/2}$; if $T$ is compact and self-adjoint with nonzero eigenvalues $(\alpha_i)_{i\in I}$, we write $|T|=(T^2)^{1/2}$, whose nonzero eigenvalues are $(|\alpha_i|)_{i\in I}$.

Every kernel $W$ belongs to $L^2([0,1]^2)$, so $T_W$ is Hilbert--Schmidt
and hence compact; symmetry of $W$ makes $T_W$ self-adjoint.
Let $P_3$ denote the path on three vertices and $C_3$ the triangle.  We record the following basic facts.

\begin{lem}\label[lem]{lem:operator-identities}
For every kernel $W$, we have
\begin{align*}
    (T_W^2f)(x)&=\int_0^1C_W(x,y)f(y)\,\d y,\\
    t(K_2,W)&=\int_0^1d_W(x)\,\d x,  \qquad  t(C_3,W)=\tr(T_W^3),\\
    t(P_3,W)&=\int_0^1d_W(x)^2\,\d x
      =\int_0^1r_W(x)\,\d x
      =\int_{[0,1]^2}C_W(x,y)\,\d x\,\d y.
\end{align*}
Here $T_W^3$ is trace class, even though $T_W$ itself need not be.
\end{lem}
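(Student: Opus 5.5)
All of these identities come from Fubini's theorem, which applies because $W$ is bounded, say $|W|\le M$, so every integrand below is absolutely integrable on a finite measure space. The only genuinely operator-theoretic point is the trace formula, together with the trace-class claim for $T_W^3$.

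\emph{First identity.} For $f\in L^2[0,1]$,
\[
(T_W^2 f)(x)=\int_0^1 W(x,z)\int_0^1 W(z,y)f(y)\,\d y\,\d z .
\]
We have $\int\!\!\int |W(x,z)W(z,y)f(y)|\,\d y\,\d z\le M^2\|f\|_1<\infty$ for every $x$. Fubini then lets us swap the two integrals, which gives $\int_0^1 C_W(x,y)f(y)\,\d y$.

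\emph{Edge and path densities.} By definition $t(K_2,W)=\int\!\!\int W(x,y)\,\d x\,\d y$, and integrating first in $y$ gives $\int_0^1 d_W$. For $P_3$ with middle vertex $z$ and endpoints $x,y$,
\[
t(P_3,W)=\int W(x,z)W(z,y)\,\d x\,\d y\,\d z .
\]
We evaluate this triple integral in three orders.
\begin{itemize}
\item Integrating $x$ and $y$ first gives $\int_0^1 d_W(z)^2\,\d z$, using symmetry $W(x,z)=W(z,x)$.
\item Integrating $z$ first gives $\int\!\!\int C_W(x,y)\,\d x\,\d y$.
\item Integrating $z$ and then $y$ gives $\int_0^1 r_W(x)\,\d x$.
\end{itemize}
All three orders are justified by boundedness.

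\emph{Trace class.} $T_W$ is Hilbert--Schmidt, since $W\in L^2$. So $T_W^2$ is a product of two self-adjoint Hilbert--Schmidt operators, hence trace class, as recorded in the preliminaries. Then $T_W^3=T_W\cdot T_W^2$ is a bounded operator times a trace-class operator, hence trace class. Alternatively, its eigenvalues are $\alpha_i^3$ and $\sum|\alpha_i|^3\le(\sup|\alpha_i|)\sum\alpha_i^2<\infty$. This also shows why $T_W$ itself need not be trace class: $\sum|\alpha_i|$ may diverge.

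\emph{Triangle density.} This is the only step needing care, since a general $W$ is not continuous and we cannot use the diagonal-integral formula for the trace directly. The plan is to compute the trace in an orthonormal basis $(e_n)$ and reduce to a Hilbert--Schmidt inner product. Write
\[
\tr(T_W^3)=\sum_n\langle T_W^2e_n,T_We_n\rangle=\langle T_W^2,T_W\rangle_{\mathrm{HS}},
\]
where the middle expression uses self-adjointness. The Hilbert--Schmidt inner product of two integral operators equals the $L^2([0,1]^2)$ inner product of their kernels. This follows by polarizing $\|T_K\|_{\mathrm{HS}}=\|K\|_{L^2}$, or from Parseval applied to the expansion of kernels in the basis $e_m\otimes e_n$.

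By the first identity, $T_W^2$ has kernel $C_W$. Hence
\[
\tr(T_W^3)=\int\!\!\int C_W(x,y)W(x,y)\,\d x\,\d y=\int W(x,z)W(z,y)W(y,x)\,\d x\,\d y\,\d z=t(C_3,W),
\]
using symmetry and Fubini again. One could instead sum $\alpha_i^3$ via the spectral expansion $W=\sum\alpha_i f_i\otimes f_i$ in $L^2$; the Hilbert--Schmidt pairing avoids questions of pointwise convergence.
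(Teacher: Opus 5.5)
Your proposal is correct. The Fubini computations for $T_W^2$, $t(K_2,W)$ and the three expressions for $t(P_3,W)$ are exactly what the paper does.

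The one genuine difference is the triangle identity. The paper simply cites Lov\'asz's cycle trace formula (7.22). You instead derive it directly:
\[
\tr(T_W^3)=\langle T_W^2,T_W\rangle_{\mathrm{HS}}=\langle C_W,W\rangle_{L^2}=t(C_3,W).
\]
This uses the first identity together with the polarized isometry $\|T_K\|_{\mathrm{HS}}=\|K\|_{L^2}$.

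The citation is shorter and gives $t(C_k,W)=\sum_i\alpha_i^k$ for all $k\ge 3$ at once. Your route is self-contained for $k=3$ and uses only facts already recorded in the preliminaries. It also sidesteps any issue with pointwise convergence of the spectral expansion of a discontinuous $W$.

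You also justify the trace-class claim for $T_W^3$ explicitly. The paper's proof leaves this to the preliminary remark that a product of two self-adjoint Hilbert--Schmidt operators, here $T_W\cdot T_W^2$, is trace class.
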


\begin{proof}
Fubini's theorem gives
\[
    (T_W^2f)(x)
    =
    \int_0^1
    \left(\int_0^1W(x,z)W(z,y)\,\d z\right)f(y)\,\d y,
\]
which proves the first assertion.
The identity for $K_2$ is its definition.
The identity for $C_3$ is the cycle trace formula
\cite[Section~7.5, (7.22)]{Lovasz2012}.
For the path $P_3$,
\begin{align*}
    \int_{[0,1]^2}C_W(x,y)\,\d x\,\d y
    &=
    \int_0^1
      \left(\int_0^1W(x,z)\,\d x\right)
      \left(\int_0^1W(z,y)\,\d y\right)\,\d z\\
    &=\int_0^1d_W(z)^2\,\d z
    =t(P_3,W).
\end{align*}
The equality with $\int_0^1r_W\,\d x$ follows from the definition of $r_W$.
\end{proof}

We can reduce the problem of forcing a kernel to the problem of forcing its codegree function.

\begin{lem}\label[lem]{lem:positive-square-root}
Let $U$ and $W$ be kernels, and let
$\varphi\colon[0,1]\to[0,1]$ be measure preserving.  Suppose that $T_W$ is
positive,
\[
    C_U(x,y)=C_W(\varphi(x),\varphi(y)),
    \qquad\text{a.e. }(x,y)\in[0,1]^2,
\]
and $t(C_3,U)=t(C_3,W)$.  Then
\[
    U(x,y)=W(\varphi(x),\varphi(y)),
    \qquad\text{a.e. }(x,y)\in[0,1]^2.
\]
\end{lem}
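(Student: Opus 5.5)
Set $V\defeq W^\varphi$. The plan is to show $T_U=T_V$ as operators; then $U=V$ a.e., since an integral kernel in $L^2([0,1]^2)$ is determined a.e. by its operator. By the hypothesis and \Cref{lem:operator-identities}, $T_U^2$ and $T_V^2$ are the integral operators with kernels $C_U$ and $C_V=C_W^\varphi$. These kernels agree a.e., so $T_U^2=T_V^2$. Moreover $t(C_3,V)=t(C_3,W)$, because pullbacks by measure-preserving maps preserve homomorphism densities. Together with the hypothesis this gives $\tr(T_U^3)=\tr(T_V^3)$.

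Next I check that $T_V$ is positive. Let $P\colon L^2[0,1]\to L^2[0,1]$ be the composition operator $Pf=f\circ\varphi$; it is an isometry because $\varphi$ is measure preserving. Its adjoint $P^*$ is the conditional-expectation-type map with $\int (P^*g)f\,\d x=\int g\,(f\circ\varphi)\,\d x$. A direct Fubini computation gives $T_V=P\,T_W\,P^*$. Hence $\inp{T_Vg,g}=\inp{T_WP^*g,P^*g}\ge0$, so $T_V$ is positive.

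Now put $A\defeq T_U^2=T_V^2$. This is a positive compact operator, and since $T_V$ is positive, the uniqueness of positive square roots gives $T_V=A^{1/2}=|T_U|$. It remains to show $T_U=|T_U|$, that is, $T_U$ has no negative eigenvalues. Write the nonzero eigenvalues of $T_U$ as $(\alpha_i)$. The eigenvalues of $|T_U|$ are then $(|\alpha_i|)$, and these are the eigenvalues of $T_V$. The trace equality reads
\[
  \sum_i \alpha_i^3=\sum_i |\alpha_i|^3 .
\]
Both series converge absolutely, because $\sum\alpha_i^2<\infty$ and the $\alpha_i$ are bounded. Therefore $\sum_i\bigl(|\alpha_i|^3-\alpha_i^3\bigr)=0$. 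Every term is nonnegative and vanishes only when $\alpha_i\ge0$, so all eigenvalues of $T_U$ are nonnegative.

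By the spectral theorem for compact self-adjoint operators, $T_U$ is therefore positive, and so $T_U=|T_U|=T_V$. This gives $U=W^\varphi$ a.e.

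The main obstacle is the positivity of $T_V$ for a non-invertible $\varphi$. The factorization $T_V=PT_WP^*$ settles it, but the adjoint identity needs a careful Fubini argument. The alternative, computing $\inp{T_Vg,g}$ directly by pushing forward the signed measure $g\,\d x$ through $\varphi$, is the same argument. The step from equal traces to no negative eigenvalues uses that $T_U^3$ is trace class, which \Cref{lem:operator-identities} supplies.
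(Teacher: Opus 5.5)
Your proof is correct and follows the paper's argument essentially step for step: you show $T_V$ is positive for $V=W^\varphi$, deduce $T_V=(T_U^2)^{1/2}=|T_U|$ from uniqueness of the positive square root, and then use the equality of triangle densities, $\sum_i\alpha_i^3=\sum_i|\alpha_i|^3$, to rule out negative eigenvalues of $T_U$. Your factorization $T_V=PT_WP^*$ is only an operator-theoretic repackaging of the paper's pushforward-measure argument, since $P^*g$ is exactly the density of the pushforward of $g\,\d x$ under $\varphi$, which is the paper's $\d\nu/\d\operatorname{Leb}$.
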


\begin{proof}
Put $V(x,y)=W(\varphi(x),\varphi(y))$.  Then
\[
    C_V(x,y)=C_W(\varphi(x),\varphi(y))=C_U(x,y),
    \quad\text{a.e. }(x,y)\in[0,1]^2,
\]
and $t(C_3,V)=t(C_3,W)$.
For $f\in L^2[0,1]$, let $\nu$ be the finite signed measure defined by
\[
    \nu(A)=\int_{\varphi^{-1}(A)}f(x)\,\d x.
\]
Measure preservation gives $\nu\ll\operatorname{Leb}$.  Moreover, for every
bounded measurable $h$, measure preservation and Cauchy--Schwarz give
\[
 \left|\int h\,\d\nu\right|=\left|\int (h\circ\varphi)f\,\d x\right|
 \leq\lVert h\rVert_2\lVert f\rVert_2.
\]
Hence, by $L^2$-duality, the density $g=\d\nu/\d\operatorname{Leb}$ belongs to $L^2[0,1]$.
Using the definition of $\nu$ gives
\[
\begin{aligned}
    \inp{T_Vf,f}
    &=\int_{[0,1]^2}W(\varphi(x),\varphi(y))f(x)f(y)\,\d x\,\d y\\
    &=\int_{[0,1]^2}W(s,t)\,\d\nu(s)\,\d\nu(t)=\inp{T_Wg,g}\geq0.
\end{aligned}
\]
Thus $T_V$ is positive.  The
hypotheses and \Cref{lem:operator-identities} give $T_U^2=T_V^2$ and
$t(C_3,U)=t(C_3,V)$.  Uniqueness of the positive square root therefore gives
\[
    T_V=(T_V^2)^{1/2}=(T_U^2)^{1/2}=|T_U|.
\]

Let $(\theta_i)_{i\in I}$ be the nonzero eigenvalues of $T_U$, repeated with
multiplicity.  The operators $T_U^3$ and $|T_U|^3$ are trace class, and hence
\[
  0=t(C_3,V)-t(C_3,U)
  =\tr(|T_U|^3)-\tr(T_U^3)
  =2\sum_{\theta_i<0,\;i\in I}|\theta_i|^3.
\]
Thus $T_U$ is positive and $T_U=|T_U|=T_V$.  Consequently,
\[
    \|U-V\|_{L^2}=\|T_U-T_V\|_{\mathrm{HS}}=0,
\]
which proves the asserted identity.
\end{proof}

\subsection{The labelled graph algebra}\label{sec:labelled-graphs}

We give some basic definitions and properties of the labelled quantum graph algebra used later; see also~\cite[Section~2.2]{LovaszSzegedy2011} and~\cite[Chapter~6]{Lovasz2012}.
A \emphd{$k$-labelled graph} is a finite loopless 
multigraph with $k$ distinguished vertices carrying distinct labels $1,\ldots,k$.
We identify these vertices with $[k]=\{1,\ldots,k\}$.
The remaining vertices are unlabelled.
For a kernel $U$ and $\mathbf{x}=(x_1,\ldots,x_k)\in [0,1]^k$, the $k$-\emphd{labelled homomorphism density} of a $k$-labelled graph $F$ in $U$ at $\mathbf{x}$ is
defined as
\[
    t_k(F,U)(\mathbf{x})
    =
    \int_{[0,1]^{V(F)\setminus[k]}}
       \prod_{uv\in E(F)}U(z_u,z_v)
       \prod_{v\in V(F)\setminus[k]}\,\d z_v,
\]
where $z_i=x_i$ at a labelled vertex.
Two $k$-labelled homomorphism densities are identified if they agree almost everywhere on $[0,1]^k$.
A \emphd{labelled quantum graph} is a finite real linear combination of labelled graphs, and $t_k$ is extended linearly.
For $k=0$ this is the ordinary homomorphism density $t=t_0$; the density of an unlabelled quantum graph is likewise defined by linear extension.
For the graph $\varnothing$ with no vertices, we use the convention
$t(\varnothing,U)=1$.

Given two $k$-labelled graphs $F$ and $G$, their product $FG$ is the $k$-labelled graph obtained from the disjoint union of $F$ and $G$ by identifying vertices carrying the same label.  The product is extended linearly to labelled quantum graphs. For fixed $\mathbf{x}\in [0,1]^k$, the two sets of integration variables are disjoint, so
\[
    t_k(FG,U)(\mathbf{x})
    =
    t_k(F,U)(\mathbf{x})t_k(G,U)(\mathbf{x}),\qquad \text{a.e. }\mathbf{x}\in [0,1]^k.
\]
Let $\llbracket h\rrbracket$ denote the unlabelled quantum graph obtained by forgetting all labels.
The following identity is a classical result, and for completeness, we give a proof.

\begin{lem}\label[lem]{lem:labelled-square}
For every $k$-labelled quantum graph $h$ and every kernel $U$,
\[
    t(\llbracket h^2\rrbracket,U)
    =
    \int_{[0,1]^k}t_k(h,U)(\mathbf{x})^2\,\d\mathbf{x}.
\]
\end{lem}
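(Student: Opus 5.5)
The plan is to first reduce to the case of a single $k$-labelled graph, using bilinearity, and then use the definition of unlabelling together with Fubini's theorem.

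\emph{Reduction.} Write $h=\sum_{a}c_aF_a$ with $c_a\in\R$ and $F_a$ $k$-labelled graphs. Bilinearity of the product gives
\[
 h^2=\sum_{a,b}c_ac_bF_aF_b .
\]
Both $\llbracket\cdot\rrbracket$ and $t(\cdot,U)$ are extended linearly, so the left-hand side equals $\sum_{a,b}c_ac_b\,t(\llbracket F_aF_b\rrbracket,U)$. Expanding $t_k(h,U)^2=\bigl(\sum_a c_at_k(F_a,U)\bigr)^2$ on the right-hand side gives $\sum_{a,b}c_ac_b\int t_k(F_a,U)t_k(F_b,U)\,\d\mathbf{x}$. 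It therefore suffices to prove, for any two $k$-labelled graphs $F,G$,
\[
 t(\llbracket FG\rrbracket,U)=\int_{[0,1]^k}t_k(FG,U)(\mathbf{x})\,\d\mathbf{x},
\]
and then invoke the already established multiplicativity $t_k(FG,U)=t_k(F,U)\,t_k(G,U)$ almost everywhere.

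\emph{The displayed identity.} For any $k$-labelled graph $H$, the unlabelled density $t(\llbracket H\rrbracket,U)$ is an integral over $[0,1]^{V(H)}$ of the bounded measurable integrand $\prod_{uv\in E(H)}U(z_u,z_v)$. Since $U$ is bounded, this integrand is integrable on the finite measure space. Fubini's theorem therefore lets us first integrate over the unlabelled coordinates $V(H)\setminus[k]$, which produces $t_k(H,U)(\mathbf{x})$, and then integrate over the labelled coordinates $\mathbf{x}\in[0,1]^k$. Applying this with $H=FG$ completes the argument.

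\emph{Points to check.} One point needs a short remark: $FG$ may acquire parallel edges between labelled vertices when $F$ and $G$ share an edge among the labels, and it is a multigraph. This is harmless, because densities are defined for loopless multigraphs by the same product formula. The only other bookkeeping issue is the convention for the graph with no vertices in the case $k=0$, which is consistent since then both sides are products of constants. I expect no real obstacle; the one step needing care is the justification of Fubini, which follows from boundedness of $U$.
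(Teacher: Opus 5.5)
Your proposal is correct and follows the paper's argument: forget labels, use Fubini to integrate out the unlabelled vertices first, and then apply multiplicativity $t_k(FG,U)=t_k(F,U)\,t_k(G,U)$. Your bilinear reduction to pairs of graphs and your remarks on multigraph products and the $k=0$ case just spell out what the paper's one-line proof leaves implicit.
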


\begin{proof}
Unlabelling, multiplicativity, and Fubini's theorem give
\[
    t(\llbracket h^2\rrbracket,U)
    =
    \int_{[0,1]^k}t_k(h^2,U)(\mathbf{x})\,\d\mathbf{x}
    =
    \int_{[0,1]^k}t_k(h,U)(\mathbf{x})^2\,\d\mathbf{x}.\hfill \qedhere
\]
\end{proof}

We finish by defining several special labelled graphs.  Let $\mathcal O_1$
be the edgeless graph consisting only of one labelled vertex.  Let $P_2^1$
be the one-labelled path on vertices $\{1,u\}$, with label $1$ at one
endpoint and $u$ unlabelled, and let $P_3^1$ be the one-labelled two-edge
path with its label at one endpoint.  Then for a.e. $x\in [0,1]$,
\begin{equation}\label{eq:rooted-labelled-evaluations}
    t_1(\mathcal O_1,U)(x)=1,
    \qquad
    t_1(P_2^1,U)(x)=d_U(x),
    \qquad
    t_1(P_3^1,U)(x)=r_U(x).
\end{equation}

\begin{center}
\begin{tikzpicture}[
    vertex/.style={circle,fill,minimum size=2mm,inner sep=0pt},
    every label/.style={font=\small},
    label distance=1pt,
    every path/.style={line width=0.5pt}
]
  \begin{scope}[xshift=0cm]
    \node[vertex,label=left:$1$] (o1) at (0,0) {};
    \node at (0,-0.75) {$\mathcal O_1$};
  \end{scope}
  \begin{scope}[xshift=3.2cm]
    \node[vertex,label=left:$1$] (p21) at (-0.55,0) {};
    \node[vertex] (p22) at (0.55,0) {};
    \draw (p21)--(p22);
    \node at (0,-0.75) {$P_2^1$};
  \end{scope}
  \begin{scope}[xshift=7cm]
    \node[vertex,label=left:$1$] (p31) at (-1,0) {};
    \node[vertex] (p32) at (0,0) {};
    \node[vertex] (p33) at (1,0) {};
    \draw (p31)--(p32)--(p33);
    \node at (0,-0.75) {$P_3^1$};
  \end{scope}
\end{tikzpicture}
\end{center}

Let $\mathcal O$ be the edgeless graph consisting only of the two labelled
vertices.
For $i\in\{1,2\}$, let $\mathcal A^i$ be the two-labelled graph consisting
of an edge incident with label $i$ and an isolated vertex carrying the other
label.
Let $P_3^{12}$ be the path $1$--$z$--$2$, whose endpoints carry labels
$1,2$ and whose middle vertex $z$ is unlabelled.
Then for a.e. $(x,y)\in [0,1]^2$,
\begin{equation}\label{eq:basic-labelled-evaluations}
\begin{aligned}
    t_2(\mathcal O,U)(x,y)&=1,
    &\qquad t_2(\mathcal A^1,U)(x,y)&=d_U(x),\\
    t_2(\mathcal A^2,U)(x,y)&=d_U(y),
    &\qquad t_2(P_3^{12},U)(x,y)&=C_U(x,y).
\end{aligned}
\end{equation}

\begin{center}
\begin{tikzpicture}[
    vertex/.style={circle,fill,minimum size=2mm,inner sep=0pt},
    every label/.style={font=\small},
    label distance=1pt,
    every path/.style={line width=0.5pt}
]
  \begin{scope}[xshift=0cm]
    \node[vertex,label=left:$1$] at (-0.65,0) {};
    \node[vertex,label=right:$2$] at (0.65,0) {};
    \node at (0,-0.7) {$\mathcal O$};
  \end{scope}
  \begin{scope}[xshift=3.4cm]
    \node[vertex,label=left:$1$] (a11) at (-0.65,0) {};
    \node[vertex,label=right:$2$] at (0.65,0) {};
    \node[vertex] (a1u) at (0,0.75) {};
    \draw (a11)--(a1u);
    \node at (0,-0.7) {$\mathcal A^1$};
  \end{scope}
  \begin{scope}[xshift=6.8cm]
    \node[vertex,label=left:$1$] at (-0.65,0) {};
    \node[vertex,label=right:$2$] (a22) at (0.65,0) {};
    \node[vertex] (a2u) at (0,0.75) {};
    \draw (a22)--(a2u);
    \node at (0,-0.7) {$\mathcal A^2$};
  \end{scope}
  \begin{scope}[xshift=10.5cm]
    \node[vertex,label=left:$1$] (q1) at (-0.65,0) {};
    \node[vertex] (qu) at (0,0.75) {};
    \node[vertex,label=right:$2$] (q2) at (0.65,0) {};
    \draw (q1)--(qu)--(q2);
    \node at (0,-0.7) {$P_3^{12}$};
  \end{scope}
\end{tikzpicture}
\end{center}

\section{Basic properties of \texorpdfstring{$W_\lambda$}{W-lambda}}

Fix $0<\lambda\leq1/128$.  We use the cosine orthonormal basis
$(e_n)_{n\geq0}$ of $L^2[0,1]$:
\[
 e_0(x)=1,\qquad e_n(x)=\sqrt2\cos(n\pi x),\qquad n\geq1.
\]
Orthogonality of the cosine basis gives
\begin{equation}\label{eq:operator-decomposition}
 L^2[0,1]
 =
 \operatorname{span}\{e_0,e_1\}
 \oplus\bigoplus_{n=2}^{\infty}\operatorname{span}\{e_n\},
 \qquad
 T_{W_\lambda}
 =
 B_0\oplus\bigoplus_{n=2}^{\infty}
 \frac{\lambda^n}{2}
 \operatorname{Id}_{\operatorname{span}\{e_n\}},
\end{equation}
where $B_0$ acts on $\operatorname{span}\{e_0,e_1\}$ and, in the ordered
basis $(e_0,e_1)$,
\[
    B_0
    =
    \begin{pmatrix}
      1/2&1/(8\sqrt2)\\
      1/(8\sqrt2)&1/32
    \end{pmatrix}.
\]

The following two propositions record some basic homomorphism densities of $W_\lambda$.

\begin{prop}\label[prop]{prop:target-properties}
For $0<\lambda\leq1/128$, the function $W_\lambda$ takes values in $(1/4,7/8)$
and extends to a real-analytic function on $\R^2$.  It is nonconstant and
\begin{align}
 d_{W_\lambda}(x)
 &=\frac12+\frac18\cos(\pi x),
 \label{eq:target-degree}\\
 r_{W_\lambda}(x)
 &=\frac{33}{128}+\frac{17}{256}\cos(\pi x)
 =\frac{17}{32}d_{W_\lambda}(x)-\frac1{128}.
 \label{eq:target-rooted-relation}
\end{align}
Further, the operator $T_{W_\lambda}$ is positive and injective.
\end{prop}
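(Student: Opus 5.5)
The plan is to read off everything from the cosine expansion \eqref{eq:operator-decomposition}.

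\emph{Range and analyticity.} Since $\lambda\leq 1/128$, the tail $S(x,y)=\sum_{n\geq2}\lambda^n\cos(n\pi x)\cos(n\pi y)$ satisfies $|S|\leq \sum_{n\geq2}\lambda^n=\lambda^2/(1-\lambda)<10^{-4}$. Write $a=\cos(\pi x)$ and $b=\cos(\pi y)$, both in $[-1,1]$. The main part is $g(a,b)=\frac12+\frac{a+b}8+\frac{ab}{16}$. It is bilinear, so its extrema over $[-1,1]^2$ occur at the corners: $g(1,1)=\frac{11}{16}$, $g(-1,-1)=\frac7{16}$, and $g(1,-1)=g(-1,1)=\frac{7}{16}$. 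Hence $g\in[7/16,11/16]$, and adding $|S|<10^{-4}$ keeps $W_\lambda$ strictly inside $(1/4,7/8)$.

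For analyticity, I use that $|\cos(n\pi z)|\leq e^{n\pi|\operatorname{Im} z|}$. For complex $(z,w)$ with $|\operatorname{Im} z|,|\operatorname{Im} w|<\delta$, where $e^{2\pi\delta}\lambda<1$, the series converges uniformly. So $W_\lambda$ extends holomorphically to a complex strip containing $\R^2$, and its restriction is real analytic on $\R^2$. It is nonconstant, for instance because $g$ varies by $1/4$ while $|S|<10^{-4}$, or because the coefficient of $e_0\otimes e_1$ is nonzero.

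\emph{Degree and $r$.} I integrate in $y$, using $\int_0^1\cos(n\pi y)\,\d y=0$ for $n\geq1$; termwise integration is justified by uniform convergence. This gives $d_{W_\lambda}(x)=\frac12+\frac18\cos(\pi x)$. Then $r_{W_\lambda}(x)=\int C_{W_\lambda}(x,y)\,\d y=\int W_\lambda(x,z)d_{W_\lambda}(z)\,\d z=(T_{W_\lambda}d_{W_\lambda})(x)$. Since $d_{W_\lambda}=\frac12 e_0+\frac1{8\sqrt2}e_1$, applying $B_0$ gives the coefficients
\[
\tfrac12\cdot\tfrac12+\tfrac1{8\sqrt2}\cdot\tfrac1{8\sqrt2}=\tfrac14+\tfrac1{128}=\tfrac{33}{128}
\]
on $e_0$, and
\[
\tfrac1{8\sqrt2}\cdot\tfrac12+\tfrac1{32}\cdot\tfrac1{8\sqrt2}=\tfrac{17}{256\sqrt2}
\]
on $e_1$, that is, $\frac{17}{256}\cos(\pi x)$. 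The linear relation then follows by direct arithmetic: $\frac{17}{32}\cdot\frac12-\frac1{128}=\frac{33}{128}$ and $\frac{17}{32}\cdot\frac18=\frac{17}{256}$.

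\emph{Positivity and injectivity.} By \eqref{eq:operator-decomposition}, $T_{W_\lambda}$ is diagonal with respect to the orthogonal decomposition. On $\operatorname{span}\{e_n\}$ for $n\geq2$ it acts by $\lambda^n/2>0$. The matrix $B_0$ has trace $>0$ and determinant $\frac1{64}-\frac1{128}=\frac1{128}>0$, so it is positive definite. Therefore $\langle T_{W_\lambda}f,f\rangle=\langle B_0f_0,f_0\rangle+\sum_{n\geq2}\frac{\lambda^n}{2}\langle f,e_n\rangle^2\geq0$, where $f_0$ is the projection of $f$ onto $\operatorname{span}\{e_0,e_1\}$. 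Equality holds only if every coefficient vanishes, so $T_{W_\lambda}f=0$ forces $f=0$.

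Before all this, I need to verify \eqref{eq:operator-decomposition} itself. This amounts to rewriting $W_\lambda$ as $\frac12e_0e_0+\frac1{8\sqrt2}(e_0e_1+e_1e_0)+\frac1{32}e_1e_1+\sum_{n\geq2}\frac{\lambda^n}2e_ne_n$, with $L^2$ convergence. I expect the only mildly delicate step to be the analytic extension, namely choosing the strip width $\delta$. Everything else is a routine computation.
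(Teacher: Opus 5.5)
Your argument is correct and largely follows the paper. There is one arithmetic slip to fix, and one step where you take a genuinely different route.

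\textbf{Same as the paper.} The degree and $r_{W_\lambda}$ computations come from the cosine decomposition. You apply $B_0$ to $d_{W_\lambda}=T_{W_\lambda}e_0$, while the paper applies $B_0^2$ to $e_0$; this is the same computation. Positivity and injectivity come from positive definiteness of $B_0$. Your trace and determinant check, $\det B_0=\frac1{128}>0$, is equivalent to the paper's completed square $\frac12\bigl(a_0+\frac{a_1}{4\sqrt2}\bigr)^2+\frac{a_1^2}{64}+\sum_{n\geq2}\frac{\lambda^n}{2}a_n^2$.

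\textbf{Arithmetic slip.} The corner values are $g(1,1)=\frac{13}{16}$ and $g(-1,-1)=\frac5{16}$, not $\frac{11}{16}$ and $\frac7{16}$. So the main part ranges over $[\frac5{16},\frac{13}{16}]$. The paper reads this off from the factorization $g=\frac14+\frac{(2+a)(2+b)}{16}$. Your conclusion still holds: the margin to $(\frac14,\frac78)$ is $\frac1{16}$ on both sides, far larger than $\lambda^2/(1-\lambda)<10^{-4}$. You should still correct the stated interval, and the oscillation of $g$ is $\frac12$, not $\frac14$.

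\textbf{Different route for analyticity.} You extend $W_\lambda$ holomorphically to the complex strip $\{|\operatorname{Im} z|,|\operatorname{Im} w|<\delta\}$ with $\lambda e^{2\pi\delta}<1$. You use $|\cos(n\pi z)|\leq e^{n\pi|\operatorname{Im} z|}$ and the Weierstrass M-test, then restrict to $\R^2$. The paper instead sums the tail in closed form as $\frac12\bigl(G_\lambda(\pi(x-y))+G_\lambda(\pi(x+y))\bigr)$, where $G_\lambda(t)=(\lambda^2\cos 2t-\lambda^3\cos t)/(1-2\lambda\cos t+\lambda^2)$. Its denominator is bounded below by $(1-\lambda)^2>0$, so real analyticity on $\R^2$ is immediate.

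Your route is softer: it works for any geometrically decaying coefficients without summing anything. The paper's route gives an explicit elementary formula for the extension. Both are valid.
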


\begin{proof}
For $x,y\in[0,1]$, write $a=\cos(\pi x)$ and $b=\cos(\pi y)$.  Then
\[
 \frac12+\frac{a+b}{8}+\frac{ab}{16}
 =\frac14+\frac{(2+a)(2+b)}{16}
 \in\left[\frac5{16},\frac{13}{16}\right],
 \qquad
 \left|\sum_{n=2}^{\infty}
 \lambda^n\cos(n\pi x)\cos(n\pi y)\right|
 \leq\frac{\lambda^2}{1-\lambda}<\frac1{16}.
\]
Thus $1/4<W_\lambda(x,y)<7/8$.

Set
\[
 G_\lambda(t)
 \defeq
 \sum_{n=2}^{\infty}\lambda^n\cos(nt)
 =\Re\left(\sum_{n=2}^\infty \lambda^n e^{int} \right)
 =
 \frac{\lambda^2\cos(2t)-\lambda^3\cos t}
      {1-2\lambda\cos t+\lambda^2}.
\]
Since
\[
 1-2\lambda\cos t+\lambda^2\geq(1-\lambda)^2>0,
\]
the function $G_\lambda$ is real analytic on $\mathbb R$.  The product-to-sum
identity gives
\[
 \sum_{n=2}^{\infty}
   \lambda^n\cos(n\pi x)\cos(n\pi y)
 = \frac12\bigl(
   G_\lambda(\pi(x-y))+G_\lambda(\pi(x+y))
 \bigr).
\]
The right-hand side extends $W_\lambda$ to a real-analytic function on
$\mathbb R^2$.

Next, the definitions of $d_{W_\lambda}$ and
$r_{W_\lambda}$ give
\[
 d_{W_\lambda}=T_{W_\lambda}e_0,
 \qquad
 r_{W_\lambda}=T_{W_\lambda}^2e_0.
\]
Therefore \eqref{eq:operator-decomposition} yields
\[
 d_{W_\lambda}
 =\frac12e_0+\frac1{8\sqrt2}e_1
 =\frac12+\frac18\cos(\pi x),
\]
\begin{equation}\label{eq:low-block-square}
 B_0^2=
 \begin{pmatrix}
  33/128&17/(256\sqrt2)\\
  17/(256\sqrt2)&9/1024
 \end{pmatrix},
\end{equation}
and
\[
 r_{W_\lambda}
 =\frac{33}{128}e_0+\frac{17}{256\sqrt2}e_1
 =\frac{33}{128}+\frac{17}{256}\cos(\pi x).
\]
This also proves the last equality in \eqref{eq:target-rooted-relation}.
Since $d_{W_\lambda}$ is nonconstant, so is $W_\lambda$.

Finally, any $f\in L^2[0,1]$ has the decomposition
\[
 f=a_0e_0+a_1e_1+\sum_{n=2}^{\infty}a_ne_n.
\]
The decomposition \eqref{eq:operator-decomposition} gives
\[
\langle T_{W_\lambda}f,f\rangle=
\frac12\left(a_0+\frac{a_1}{4\sqrt2}\right)^2
+\frac{a_1^2}{64}
+\sum_{n=2}^{\infty}\frac{\lambda^n}{2}a_n^2.
\]
This is strictly positive for $f\neq0$, proving positivity and injectivity.
\end{proof}

\begin{prop}
For every $0<\lambda\leq1/128$,
\begin{align}
    t(K_2,W_\lambda)&=\frac12,\notag\\
    t(P_3,W_\lambda)&=\frac{33}{128},\label{eq:target-path-density}\\
    t(C_3,W_\lambda)
    &=
    \frac{4505}{32768}
    +\frac{\lambda^6}{8(1-\lambda^3)}.
    \label{eq:target-triangle-density}
\end{align}
\end{prop}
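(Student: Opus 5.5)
The plan is to reduce each density to a statement about the operator $T_{W_\lambda}$ through \Cref{lem:operator-identities}, and then read the values off the block decomposition \eqref{eq:operator-decomposition}.

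\textbf{Edge and path densities.} By \Cref{lem:operator-identities}, $t(K_2,W_\lambda)=\int_0^1 d_{W_\lambda}(x)\,\d x$. Using \eqref{eq:target-degree}, the cosine term integrates to zero over $[0,1]$, so this equals $1/2$. For $P_3$ the same lemma gives $t(P_3,W_\lambda)=\int_0^1 r_{W_\lambda}(x)\,\d x$. By \eqref{eq:target-rooted-relation} this equals $33/128$, again because the $\cos(\pi x)$ term integrates to zero. Equivalently, it is the $(e_0,e_0)$ entry of $B_0^2$ in \eqref{eq:low-block-square}.

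\textbf{Triangle density.} By \Cref{lem:operator-identities}, $t(C_3,W_\lambda)=\tr(T_{W_\lambda}^3)$, and this trace is the sum of the cubes of the eigenvalues. The decomposition \eqref{eq:operator-decomposition} is orthogonal, so the trace splits as
\[
\tr(B_0^3)+\sum_{n\geq2}\Bigl(\frac{\lambda^n}{2}\Bigr)^3 .
\]
The geometric tail is
\[
\frac18\sum_{n\geq2}\lambda^{3n}=\frac{\lambda^6}{8(1-\lambda^3)} .
\]
For the finite block I would avoid computing eigenvalues and instead use $\tr(B_0^3)=\tr(B_0\cdot B_0^2)$ with the matrix \eqref{eq:low-block-square}:
\[
\tr(B_0^3)=\frac12\cdot\frac{33}{128}+2\cdot\frac{1}{8\sqrt2}\cdot\frac{17}{256\sqrt2}+\frac1{32}\cdot\frac{9}{1024}.
\]
The three terms are
\[
\frac{33}{256}=\frac{4224}{32768},\qquad \frac{17}{2048}=\frac{272}{32768},\qquad \frac{9}{32768}.
\]
They sum to $4505/32768$, which is the constant in \eqref{eq:target-triangle-density}.

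The only step needing care is justifying the trace identity. The eigenvalues of $T_{W_\lambda}$ are summable, since they are geometric in $\lambda$, so $T_{W_\lambda}^3$ is trace class. Its trace can then be computed blockwise in the orthonormal basis $(e_n)$. Beyond that, the main risk is purely arithmetic, namely getting the fractions right.
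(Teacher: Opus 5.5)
Your proposal is correct and follows essentially the same route as the paper: both use \Cref{lem:operator-identities} together with the block decomposition \eqref{eq:operator-decomposition}, and both evaluate the tail as the geometric sum $\frac18\sum_{n\geq2}\lambda^{3n}$. The differences are only cosmetic. For $P_3$ you integrate $r_{W_\lambda}$ where the paper integrates $d_{W_\lambda}^2$, and you compute $\tr(B_0^3)$ as $\tr(B_0\cdot B_0^2)$ where the paper uses the closed form $a^3+c^3+3b^2(a+c)$; your arithmetic $4224+272+9=4505$ is correct.
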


\begin{proof}
    By \Cref{lem:operator-identities} and
\eqref{eq:target-degree},
\[
\begin{aligned}
t(K_2,W_\lambda)
&=\int_0^1 d_{W_\lambda}(x)\,\d x
 =\int_0^1
   \left(\frac12+\frac18\cos(\pi x)\right)\,\d x
 =\frac12,
\\
t(P_3,W_\lambda)
&=\int_0^1 d_{W_\lambda}(x)^2\,\d x =\int_0^1
  \left(\frac12+\frac18\cos(\pi x)\right)^2\,\d x =\frac{33}{128}. 
\end{aligned}
\]

For the triangle, combining
\Cref{lem:operator-identities} with \eqref{eq:operator-decomposition} gives
\[
 \tr(B_0^3)
 =
 \left(\frac12\right)^3+\left(\frac1{32}\right)^3
 +3\left(\frac1{8\sqrt2}\right)^2
   \left(\frac12+\frac1{32}\right)
 =\frac{4505}{32768},
\]
and therefore
\[
 t(C_3,W_\lambda)=\tr(T_{W_\lambda}^3)
 =\frac{4505}{32768}+\frac18\sum_{n=2}^{\infty}\lambda^{3n}
 = \frac{4505}{32768}+\frac{\lambda^6}{8(1-\lambda^3)}.\hfill \qedhere
\]
\end{proof}

Our next goal is to express $C_{W_\lambda}$ as a rational function of $d_{W_\lambda}(x)$.
Let $P_n$ be the $n$-th \emphd{Chebyshev polynomial},
characterized by $P_n(\cos\theta)=\cos(n\theta)$.

For $z\in (0,1)$, set
\[
a_z=\frac{1+z}{2\sqrt z}.
\]
For $(s,t)\in \R^2$, set
\begin{align}
 L(s,t)
 &=\frac{33}{128}+\frac{17(s+t)}{256}+\frac{9st}{512},\notag\\
 V_z(s,t)
 &=(1-z)^2\bigl((1+z)^2-4zst\bigr)+4z^2(s-t)^2,
 \notag\\
 N_z(s,t)
 &=\frac12\Bigl(
 1-z^2+2z^2(s^2+t^2)-z(3+z^2)st
 -(1+zst)V_z(s,t)
 \Bigr),
 \notag\\
 I_z(s,t)
 &=V_z(s,t)L(s,t)+N_z(s,t).
 \label{eq:certificate-polynomial}
\end{align}

A series \emphd{converges locally uniformly} on a set $S$ if it converges uniformly on every compact subset of $S$. Instead of working with $d_{W_\lambda}$ directly, we express $C_{W_\lambda}$ as a rational function of
\[
\cos(\pi x)=8d_{W_\lambda}(x)-4.
\]

\begin{lem}\label[lem]{lem:chebyshev-kernel}
Fix $0<z<1$.  On $(-a_z,a_z)^2$, the series
\[
 \frac12\sum_{n=2}^{\infty}z^nP_n(s)P_n(t)
\]
converges locally uniformly and absolutely.  Define
\begin{equation}\label{eq:codegree-series}
 R_z(s,t)
 =L(s,t)+\frac12\sum_{n=2}^{\infty}z^nP_n(s)P_n(t).
\end{equation}
Then
\[
 V_z(s,t)>0,
 \qquad
 \frac12\sum_{n=2}^{\infty}z^nP_n(s)P_n(t)
 =\frac{N_z(s,t)}{V_z(s,t)}.
\]
Consequently,
\begin{equation}\label{eq:codegree-rationalization}
 R_z(s,t)
 =L(s,t)+\frac{N_z(s,t)}{V_z(s,t)}
 =\frac{I_z(s,t)}{V_z(s,t)}.
\end{equation}
If $0<z\leq2^{-14}$, then
\begin{equation}\label{eq:pole-signs}
\begin{aligned}
 V_z(s,s)&<0<N_z(s,s),&& |s|>a_z,\\
 V_z(s,s)&=0<N_z(s,s),&& |s|=a_z.
\end{aligned}
\end{equation}
Further, for $0<\lambda\leq1/128$,
\begin{equation}\label{eq:target-codegree}
 C_{W_\lambda}(x,y)
 =R_{\lambda^2}(\cos(\pi x),\cos(\pi y)),\qquad x,y\in[0,1].
\end{equation}
\end{lem}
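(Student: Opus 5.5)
The plan is to treat the series as a Mehler-type generating function for Chebyshev polynomials. We parametrize $s=\tfrac12(w+w^{-1})$ and $t=\tfrac12(u+u^{-1})$, so that $P_n(s)=\tfrac12(w^n+w^{-n})$ and $P_n(t)=\tfrac12(u^n+u^{-n})$.

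\emph{Convergence.} For real $s$ with $|s|\le 1$ we have $|w|=1$. For $1<|s|<a_z$ we may take $w$ real with $1<|w|<z^{-1/2}$, because $\tfrac12(z^{-1/2}+z^{1/2})=a_z$. Define $\rho(s)=\max(|w|,|w|^{-1})$. This quantity is continuous in $s$, and $|P_n(s)|\le\rho(s)^n$. On a compact subset of $(-a_z,a_z)^2$ we have $\rho(s),\rho(t)\le\rho_0<z^{-1/2}$. Hence the terms are bounded by $(z\rho_0^2)^n$ with $z\rho_0^2<1$, and the Weierstrass M-test gives local uniform and absolute convergence.

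\emph{Closed form.} Expanding the product of Chebyshev values gives
\[
\sum_{n\ge0}z^nP_n(s)P_n(t)=\frac14\sum_{\epsilon,\delta=\pm1}\frac{1}{1-zw^{\epsilon}u^{\delta}}.
\]
Each summand is a convergent geometric series, since $|zw^{\pm1}u^{\pm1}|\le z\rho(s)\rho(t)<1$. Put everything over the common denominator $D=\prod_{\epsilon,\delta}(1-zw^\epsilon u^\delta)$. This product is symmetric under $w\leftrightarrow w^{-1}$ and under $u\leftrightarrow u^{-1}$, so it is a polynomial in $s$ and $t$. Multiplying it out, I expect to find $D=V_z(s,t)$. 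The numerator is likewise symmetric and should equal $1-z^2+2z^2(s^2+t^2)-z(3+z^2)st$. Subtracting the $n=0$ and $n=1$ terms, namely $1+zst$, and halving gives exactly $N_z/V_z$.

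To avoid any branch issues, I would verify the resulting polynomial identity $\text{numerator}=(\text{full sum})\cdot V_z$ on $[-1,1]^2$ using $s=\cos\alpha$, $t=\cos\beta$. I would then extend it to $(-a_z,a_z)^2$, using that both sides are real-analytic in $(s,t)$ there: the series is a locally uniform limit of polynomials, and $V_z\ne0$.

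\emph{Positivity of $V_z$.} Group the four factors of $D$ into pairs:
\begin{itemize}
\item When $w,u$ lie on the unit circle, the factors pair into complex conjugates, giving $|1-zwu|^2|1-zw\bar u|^2>0$.
\item When $w$ or $u$ is real, each factor is real, and the bound $|zw^{\pm1}u^{\pm1}|<1$ makes each one positive.
\end{itemize}
Both cases give $V_z>0$. Equation \eqref{eq:codegree-rationalization} then follows at once, since $R_z=L+N_z/V_z=(V_zL+N_z)/V_z=I_z/V_z$.

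\emph{Pole signs.} On the diagonal, $V_z(s,s)=(1-z)^2\bigl((1+z)^2-4zs^2\bigr)$. This is negative exactly when $s^2>(1+z)^2/(4z)=a_z^2$, and zero when $|s|=a_z$. For the sign of $N_z$, write
\[
2N_z(s,s)=1-z^2-z(3-4z+z^2)s^2-(1+zs^2)V_z(s,s).
\]
At $s^2=a_z^2$ the $V_z$ term vanishes, and what remains is about $1-\tfrac34(1+z)^2\cdot\tfrac{3-4z+z^2}{3}>0$ for small $z$. For $s^2>a_z^2$ the term $-(1+zs^2)V_z$ is positive and grows like $4z^2(1-z)^2s^4$.

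This sign check is the step I expect to be the main obstacle. I would handle it by setting $\sigma=zs^2\ge(1+z)^2/4$ and writing $2N_z$ as a quadratic in $\sigma$ with explicit coefficients. I would then show that it is positive and increasing on $\sigma\ge(1+z)^2/4$ using crude bounds valid for $z\le2^{-14}$; for instance, the positive quadratic term dominates the $-3\sigma$ term once $\sigma\ge1/4$.

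\emph{Codegree formula.} Apply \Cref{lem:operator-identities} together with the square of the decomposition \eqref{eq:operator-decomposition}, which gives
\[
C_{W_\lambda}(x,y)=\sum_{i,j\in\{0,1\}}(B_0^2)_{ij}\,e_i(x)e_j(y)+\sum_{n\ge2}\frac{\lambda^{2n}}{4}\,e_n(x)e_n(y).
\]
Using \eqref{eq:low-block-square}, the first sum is $L(\cos\pi x,\cos\pi y)$: the cross terms give $\tfrac{17}{256}(s+t)$ and the $(1,1)$ entry gives $\tfrac{9}{512}st$. In the second sum, $e_n(x)e_n(y)=2\cos(n\pi x)\cos(n\pi y)$, so each term equals $\tfrac12\lambda^{2n}P_n(s)P_n(t)$ with $z=\lambda^2$. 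Since $[-1,1]\subset(-a_z,a_z)$, the series converges there, and this yields \eqref{eq:target-codegree}.
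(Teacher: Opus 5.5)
\paragraph{Overall.} Your outline follows the paper in three of its four parts. Your $\rho(s)=\max(|w|,|w|^{-1})$ is the paper's $c_b$. The pole-sign analysis reduces to a quadratic on the diagonal, exactly as the paper's $P_z(s^2)$ does. The codegree formula comes from squaring \eqref{eq:operator-decomposition}.

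\paragraph{The closed form is a genuinely different route.} The paper quotes Szab\l owski's bilinear generating function on $[-1,1]^2$. It transfers it to $(-a_z,a_z)^2$ by noting that every $r^m$-coefficient of $V_rS_r-Q_r$ is a polynomial vanishing on $[-1,1]^2$, and then evaluates at $r=z$. You instead sum four geometric series in Joukowski variables. This is self-contained and correct. With $A=wu+w^{-1}u^{-1}$ and $B=wu^{-1}+w^{-1}u$, one has $A+B=4st$ and $AB=4(s^2+t^2-1)$. Hence $\prod(1-zw^{\pm1}u^{\pm1})=(1+z^2)^2-4z(1+z^2)st+4z^2(s^2+t^2-1)=V_z(s,t)$. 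The numerator is $\frac14\bigl(4(1+z^2)-z(3+z^2)(A+B)+2z^2AB\bigr)=1-z^2+2z^2(s^2+t^2)-z(3+z^2)st$.

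Your computation actually needs no extension step. The identity $P_n(\frac12(w+w^{-1}))=\frac12(w^n+w^{-n})$ holds for every nonzero complex $w$, and everything is invariant under $w\mapsto w^{-1}$. So choosing $|w|=1$ or $w$ real as appropriate gives the identity on all of $(-a_z,a_z)^2$ at once; there are no branch issues. This matters, because the fallback you propose is not justified as written. A locally uniform limit of polynomials on a real domain need not be real analytic, since every continuous function is such a limit. You would need convergence on a complex neighbourhood (your bound does extend to Bernstein ellipses), or the paper's coefficientwise argument.

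\paragraph{Three steps need repair.}

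\emph{Positivity of $V_z$.} Your argument breaks in the mixed case. If $w$ is real and $u=e^{i\beta}$ is not, the factors $1-zw^{\pm1}u^{\pm1}$ are not real. Pair $1-zwu$ with $1-zwu^{-1}=\overline{1-zwu}$, and $1-zw^{-1}u$ with its conjugate, instead. Alternatively, use the paper's one-line argument: $V_z$ is by definition $(1-z)^2\bigl((1+z)^2-4zst\bigr)+4z^2(s-t)^2$, and $st<a_z^2$.

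\emph{Pole signs.} This step is only sketched, and your sample estimate is wrong. First, $4\sigma^2\geq3\sigma$ requires $\sigma\geq3/4$, not $\sigma\ge 1/4$. Second, $-3\sigma$ is not the true linear term: $-(1+\sigma)V_z(s,s)$ contributes $\bigl(4(1-z)^2-(1-z^2)^2\bigr)\sigma$, which cancels $-(3-4z+z^2)\sigma$ to leading order. The correct expansion is $2N_z(s,s)=4(1-z)^2\sigma^2+z(1-z)(z^2+z-4)\sigma+z^2(1-z^2)$. At $\sigma_0=(1+z)^2/4$ it equals $(1-z)^3(1+z)/4>0$, and its derivative there is $(1-z)(2-2z-z^2-z^3)>0$. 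Convexity then gives positivity for all $\sigma\ge\sigma_0$. This is precisely the paper's $P_z$ computation with $u=s^2=\sigma/z$.

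\emph{Codegree formula.} The operator identity gives \eqref{eq:target-codegree} only almost everywhere. To get it for every $(x,y)\in[0,1]^2$, note that both sides are continuous, as the paper does.
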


\begin{proof}
Fix $0\leq b<a_z$ and put
\[
 c_b=
 \begin{cases}
  1,&b\leq1,\\
  b+\sqrt{b^2-1},&b>1.
 \end{cases}
\]
For $|u|>1$, the standard representation
\cite[Section~1.4.2, (1.47)]{MasonHandscomb2003} can be written as
\[
 P_n(u)=\frac{(u+\sqrt{u^2-1})^n+(u-\sqrt{u^2-1})^n}{2},
\]
If $1<|u|\leq b$, both quantities
$|u\pm\sqrt{u^2-1}|$ are at most $c_b$; while for $|u|\leq1$, the defining
identity $P_n(\cos\theta)=\cos(n\theta)$ gives $|P_n(u)|\leq1$.
Hence
\[
\sup_{|u|\leq b}|P_n(u)|\leq c_b^n.
\]
For $b\in (1,a_z)$, $zc_b^2<zc_{a_z}^2<1$.
For $b\in [0,1]$, $zc_b^2\leq z<1$.
Thus the series
\[
\dfrac{1}{2}\sum_{n=2}^\infty z^n P_n(s)P_n(t)
\]
converges uniformly and absolutely on $[-b,b]^2$.
Since every compact subset of $(-a_z,a_z)^2$ is contained in $[-b,b]^2$ for some
$b<a_z$, the series converges locally uniformly and absolutely on $(-a_z,a_z)^2$.

For $|s|,|t|\leq1$ and $|r|<1$, the bilinear generating formula
\cite[Proposition~1, (2.2)]{Szablowski2019}, after rewriting its
denominator, gives
\[
 S_r(s,t)\defeq\sum_{n=0}^{\infty}r^nP_n(s)P_n(t)
 =\frac{Q_r(s,t)}{V_r(s,t)},
 \qquad
 Q_r(s,t)=1-r^2+2r^2(s^2+t^2)-r(3+r^2)st.
\]
For each $m\geq0$, the coefficient of $r^m$ in
$V_rS_r-Q_r$ is a polynomial in $s,t$ that vanishes on $[-1,1]^2$,
and hence vanishes identically.  Thus
\[
 V_rS_r=Q_r
\]
as an identity in $\mathbb{R}[s,t][[r]]$.

Now fix $z\in(0,1)$.  For $|s|,|t|<a_z$, the convergence established
above allows us to evaluate this formal identity at $r=z$, giving
\[
 V_z(s,t)S_z(s,t)=Q_z(s,t).
\]
Moreover,
\[
 V_z(s,t)
 =(1-z)^2\bigl((1+z)^2-4zst\bigr)+4z^2(s-t)^2>0,
\]
because $st<a_z^2=(1+z)^2/(4z)$.  Therefore
\[
 S_z(s,t)=\frac{Q_z(s,t)}{V_z(s,t)}
\]
for every $z\in(0,1)$ and $|s|,|t|<a_z$.

Since $P_0=1$ and $P_1(u)=u$, it follows that
\[
\begin{aligned}
 \frac12\sum_{n=2}^{\infty}z^nP_n(s)P_n(t)
 &=\frac12\left(S_z(s,t)-1-zst\right)\\
 &=\frac{Q_z(s,t)-(1+zst)V_z(s,t)}
         {2V_z(s,t)}
 =\frac{N_z(s,t)}{V_z(s,t)}.
\end{aligned}
\]
Thus,
\[
 R_z(s,t)
 =L(s,t)+\frac{N_z(s,t)}{V_z(s,t)}
 =\frac{V_z(s,t)L(s,t)+N_z(s,t)}{V_z(s,t)}
 =\frac{I_z(s,t)}{V_z(s,t)}.
\]

Suppose that $0<z\leq2^{-14}$.  On the diagonal,
\[
 V_z(s,s)=(1-z)^2\bigl((1+z)^2-4zs^2\bigr),
 \qquad
 N_z(s,s)=\frac{z^2(1-z)}2P_z(s^2),
\]
where
\[
 P_z(u)=4(1-z)u^2+(z^2+z-4)u+1+z.
\]
For $u\geq a_z^2$,
\[
 P_z(a_z^2)=\frac{(1-z)^2(1+z)}{4z^2}>0,
 \qquad
 P_z'(u)\geq P_z'(a_z^2)
 =\frac2z-2-z-z^2>0.
\]
Thus $P_z(u)>0$ for $u\geq a_z^2$, and
\eqref{eq:pole-signs} follows.

Finally, squaring \eqref{eq:operator-decomposition} gives
\[
 T_{W_\lambda}^2
 =
 B_0^2\oplus\bigoplus_{n=2}^{\infty}
 \frac{\lambda^{2n}}4
 \operatorname{Id}_{\operatorname{span}\{e_n\}}.
\]
By \eqref{eq:low-block-square}, the first summand has integral kernel
$L(\cos(\pi x),\cos(\pi y))$.  Since
$e_n(x)=\sqrt2P_n(\cos(\pi x))$, the right-hand side is represented by
\[
 L(\cos(\pi x),\cos(\pi y))
 +\frac12\sum_{n=2}^{\infty}\lambda^{2n}
 P_n(\cos(\pi x))P_n(\cos(\pi y))
 =
 R_{\lambda^2}(\cos(\pi x),\cos(\pi y)).
\]
By \Cref{lem:operator-identities}, this agrees almost everywhere with
$C_{W_\lambda}$.  Both sides are continuous (for $C_{W_\lambda}$ this follows
from dominated convergence), so they agree everywhere, proving \eqref{eq:target-codegree}.
\end{proof}

The following lemma helps to force $D_U$ uniformly bounded for every kernel $U$ we will consider and make the representing series uniformly converge.

\begin{lem}\label[lem]{lem:pole-exclusion}
Let $0<z\leq2^{-14}$, and let $D\colon[0,1]\to\R$ be bounded and
measurable.  Suppose that $K$ is a kernel, $T_K$ is positive, and
\begin{equation}\label{eq:residual-equation}
 V_z(D(x),D(y))K(x,y)=N_z(D(x),D(y))
 \qquad\text{for a.e. }(x,y)\in[0,1]^2.
\end{equation}
Then there is a number $0\leq b<a_z$ such that $|D|\leq b$ almost everywhere.
Moreover,
\[
 K(x,y)=\frac12\sum_{n=2}^{\infty}
 z^nP_n(D(x))P_n(D(y))
 \qquad\text{for a.e. }(x,y)\in[0,1]^2,
\]
and the series converges uniformly and absolutely on $[-b,b]^2$.
\end{lem}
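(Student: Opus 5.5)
The plan is to study the essential range $E\subseteq\R$ of $D$, which is compact because $D$ is bounded. Recall that $s\in E$ means every neighbourhood of $s$ has preimage under $D$ of positive measure. It suffices to show that $E\subseteq(-a_z,a_z)$. Then $b\defeq\max_{s\in E}|s|<a_z$, and $|D|\leq b$ almost everywhere. Once this is known, \Cref{lem:chebyshev-kernel} gives $V_z>0$ on $[-b,b]^2$. So \eqref{eq:residual-equation} can be divided by $V_z(D(x),D(y))$, giving $K=N_z/V_z$ evaluated at $(D(x),D(y))$ almost everywhere. The same lemma identifies $N_z/V_z$ with the Chebyshev series, which converges uniformly and absolutely on $[-b,b]^2$. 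This yields the second assertion.

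First, a preliminary remark on the pointwise behaviour near the diagonal. On the diagonal, $V_z(s,s)$ and $N_z(s,s)$ depend only on $s^2$, so \eqref{eq:pole-signs} applies equally at $\pm s$. Suppose $s_0\in E$ with $|s_0|\geq a_z$. Then $N_z(s_0,s_0)>0$, and by continuity of the polynomial $N_z$ there are $\varepsilon,c>0$ with $N_z\geq c$ on $[s_0-\varepsilon,s_0+\varepsilon]^2$. Put $A=D^{-1}([s_0-\varepsilon,s_0+\varepsilon])$, which has positive measure. On $A\times A$ the equation \eqref{eq:residual-equation} forces $V_z(D(x),D(y))\neq0$ almost everywhere, since otherwise $N_z=0$ there. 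Hence $K=N_z/V_z$ almost everywhere on $A\times A$.

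Case $|s_0|>a_z$. Here $V_z(s_0,s_0)<0$, so after shrinking $\varepsilon$ we also have $V_z<0$ on the square $[s_0-\varepsilon,s_0+\varepsilon]^2$. Then $K<0$ almost everywhere on $A\times A$, and hence $\inp{T_K\one_A,\one_A}=\int_{A\times A}K<0$. This contradicts the positivity of $T_K$.

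Case $|s_0|=a_z$. Now $V_z(s_0,s_0)=0$, and $V_z$ may change sign near $(s_0,s_0)$, so positivity alone is not convenient. Instead I use boundedness of $K$. For every $\delta>0$, choose $\varepsilon$ so small that $|V_z|\leq\delta$ on the $\varepsilon$-square, which is possible by continuity. Then $|K|\geq c/\delta$ almost everywhere on the positive-measure set $A\times A$. Letting $\delta\to0$ contradicts $K\in L^\infty$.

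Since $E$ is closed, the case in which the values of $D$ only accumulate at $\pm a_z$ from inside is covered by the second case. So $E\subseteq(-a_z,a_z)$, as required.

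The main obstacle is this boundary case $|s|=a_z$. There the sign argument breaks down, and one must notice that the strict positivity $N_z(s,s)>0$ in \eqref{eq:pole-signs}, combined with boundedness of the kernel $K$, already excludes it. The rest consists of routine continuity estimates and an application of \Cref{lem:chebyshev-kernel}.
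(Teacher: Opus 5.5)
Your proof is correct and follows essentially the same route as the paper: you pass to the compact essential range of $D$, rule out points with $|s|>a_z$ by testing positivity of $T_K$ on $\one_A$, and rule out $|s|=a_z$ using $N_z(s,s)>0=V_z(s,s)$ together with $K\in L^\infty$; you then divide by $V_z>0$ on $[-b,b]^2$ and invoke the Chebyshev series identity. Your boundary-case estimate $|K|\geq c/\delta$ is just a rephrasing of the paper's choice of an interval on which $N_z>\|K\|_\infty|V_z|$.
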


\begin{proof}
Let $S$ be the essential range of $D$; thus $s\in S$ precisely when every open interval
$J$ containing $s$ satisfies $\operatorname{Leb}(D^{-1}(J))>0$.
Since $D$ is bounded, $S$ is compact.  Put $M=\lVert K\rVert_\infty$.
After changing $K$ on a null set, we may assume that
$|K(x,y)|\leq M$ for every $(x,y)\in[0,1]^2$.

Suppose first that $s\in S$ and $|s|>a_z$.  By \eqref{eq:pole-signs},
$N_z(s,s)/V_z(s,s)<0$.
Hence there are an interval $J$ containing $s$
and a number $\eta>0$ such that $V_z$ does not vanish on $J^2$ and
\[
 \frac{N_z(u,v)}{V_z(u,v)}\leq-\eta,
 \qquad u,v\in J.
\]
The set $A=\{x:D(x)\in J\}$ has positive measure.  Equation
\eqref{eq:residual-equation} gives
\[
 \langle T_K\one_A,\one_A\rangle
 =\int_{A\times A}K(x,y)\,\d x\,\d y
 \leq-\eta\operatorname{Leb}(A)^2<0,
\]
contrary to the positivity of $T_K$.

Suppose next that $s\in S$ and $|s|=a_z$.
Then $V_z(s,s)=0<N_z(s,s)$.
By continuity, there is an interval $J$ containing
$s$ such that
\[
 N_z(u,v)>M|V_z(u,v)|,
 \qquad u,v\in J.
\]
The set $A=\{x:D(x)\in J\}$ has positive measure.
As a consequence, for almost every
$(x,y)\in A^2$,
\[
 |V_z(D(x),D(y))K(x,y)|
 \leq M|V_z(D(x),D(y))|<N_z(D(x),D(y)),
\]
contradicting \eqref{eq:residual-equation} on a set of positive measure.
Thus $S\subset(-a_z,a_z)$.
By compactness, $|D|\leq b$ almost everywhere for
some $0\leq b<a_z$.

Now \Cref{lem:chebyshev-kernel} gives $V_z>0$ on $[-b,b]^2$ and
\[
 \frac{N_z(s,t)}{V_z(s,t)}
 =\frac12\sum_{n=2}^{\infty}z^nP_n(s)P_n(t),
 \qquad |s|,|t|\leq b,
\]
with uniform absolute convergence.
Dividing \eqref{eq:residual-equation} by $V_z(D(x),D(y))$ proves the asserted expansion.
\end{proof}

\section{Proof of the main theorem}

\begin{proof}
Fix $\lambda\in (0,1/128]$.
Recall the labelled graphs defined in \Cref{sec:labelled-graphs}
and, for a kernel $U$, put
\[
 D_U=8d_U-4.
\]
Define the one-labelled quantum graph
\[
 g=P_3^1-\frac{17}{32}P_2^1+\frac1{128}\mathcal O_1.
\]
Equations~\eqref{eq:rooted-labelled-evaluations} and
\eqref{eq:target-rooted-relation} give
\[
    t_1(g,W_\lambda)=0.
\]

Define a two-labelled quantum graph
\[
\begin{aligned}
 f_\lambda={}&
 V_{\lambda^2}(8\mathcal A^1-4\mathcal O,
    8\mathcal A^2-4\mathcal O)P_3^{12}-I_{\lambda^2}(8\mathcal A^1-4\mathcal O,
                    8\mathcal A^2-4\mathcal O).
\end{aligned}
\]
By \eqref{eq:basic-labelled-evaluations} and multiplicativity, for a.e. $(x,y)\in[0,1]^2$,
\begin{equation}\label{eq:certificate-evaluation}
 t_2(f_\lambda,U)(x,y)
 =
 V_{\lambda^2}(D_U(x),D_U(y))C_U(x,y)
 -I_{\lambda^2}(D_U(x),D_U(y)).
\end{equation}
Note that \eqref{eq:target-degree} gives
\[
    D_{W_\lambda}(x)=\cos(\pi x).
\]
Thus \eqref{eq:codegree-rationalization} and \eqref{eq:target-codegree} imply
\[
t_2(f_\lambda,W_\lambda)=0.
\]

Put
\[
 \mathcal Q_\lambda
 =\llbracket f_\lambda^2\rrbracket+\llbracket g^2\rrbracket.
\]
Since $V_{\lambda^2}(s,t)$ and $I_{\lambda^2}(s,t)$ are polynomials in $\lambda,s,t$ with fixed finite monomial supports, $f_\lambda$ is a linear combination of finitely many fixed two-labelled graphs, with coefficients depending polynomially on $\lambda$.
Since $g$ is fixed, the quantum graphs $\mathcal Q_\lambda$ lie in a single finite-dimensional vector space of unlabelled graphs.
Expand $\mathcal Q_\lambda$ in unlabelled graphs,
delete all isolated vertices and then collect isomorphic graphs.
Let $c_H(\lambda)$ be the resulting coefficient of $H$, and let $\mathscr B$ consist of the nonempty isomorphism classes for which $c_H$ is not identically zero.
Deleting isolated vertices does not change homomorphism densities.
Consequently, for every kernel $U$,
\begin{equation}\label{eq:forcing-expansion}
 t(\mathcal Q_\lambda,U)
 =c_{\varnothing}(\lambda)t(\varnothing,U)
  +\sum_{H\in\mathscr B}c_H(\lambda)t(H,U),
\end{equation}
where $t(\varnothing,U)=1$.

Every graph occurring in $\mathcal Q_\lambda$ is simple.  Indeed, products
identify only equally labelled vertices; every edge has an unlabelled endpoint,
and unlabelled vertices from distinct factors remain distinct, so no loop or
parallel edge is created.  Hence every graph in $\mathscr B$ is simple.  Finally, set
\[
 \mathscr F=\mathscr B\cup\{K_2,\;P_3,\;C_3\}.
\]
This is a finite family of simple graphs independent of $\lambda$.

Let $U$ be a kernel satisfying
\[
    t(H,U)=t(H,W_\lambda)
    \qquad\text{for every }H\in\mathscr F.
\]
We prove that $U$ is a pullback of $W_\lambda$.

By \Cref{lem:labelled-square}, the two identities already proved for
$W_\lambda$ give $t(\mathcal Q_\lambda,W_\lambda)=0$.  On the other hand,
\eqref{eq:forcing-expansion} and the density constraints give
\begin{align*}
 t(\mathcal Q_\lambda,U)
 &=c_{\varnothing}(\lambda)
   +\sum_{H\in\mathscr B}c_H(\lambda)t(H,U)\\
 &=c_{\varnothing}(\lambda)
   +\sum_{H\in\mathscr B}c_H(\lambda)t(H,W_\lambda)\\
 &=t(\mathcal Q_\lambda,W_\lambda)=0.
\end{align*}
Applying \Cref{lem:labelled-square} once more, together with
\eqref{eq:certificate-evaluation} and \eqref{eq:rooted-labelled-evaluations}, yields
\begin{align*}
0={}&\int_{[0,1]^2}
 \left(
 V_{\lambda^2}(D_U(x),D_U(y))C_U(x,y)
 -I_{\lambda^2}(D_U(x),D_U(y))
 \right)^2\,\d x\,\d y\\
&+\int_0^1
 \left(r_U(x)-\frac{17}{32}d_U(x)+\frac1{128}\right)^2\,\d x.
\end{align*}
Both summands are nonnegative.  Hence
\begin{align}
 V_{\lambda^2}(D_U(x),D_U(y))C_U(x,y)
 &=I_{\lambda^2}(D_U(x),D_U(y)),
 \qquad\text{for a.e. }(x,y)\in[0,1]^2,
 \label{eq:forced-codegree-identity}
\end{align}
and
\begin{align}
 r_U(x)&=\frac{17}{32}d_U(x)-\frac1{128},
 \qquad\text{for a.e. }x\in[0,1].
 \label{eq:forced-rooted-relation}
\end{align}

Expanding $D_U=8d_U-4$ and using the constraints for $K_2$ and $P_3$
gives
\[
 \int_0^1D_U(x)\,\d x=0,
 \qquad
 \int_0^1D_U(x)^2\,\d x=\frac12.
\]
Thus
\[
 e_0=\one,
 \qquad
 e_1=\sqrt2D_U
\]
are orthonormal.  Since $d_U=T_Ue_0$ and $r_U=T_Ud_U$,
\eqref{eq:forced-rooted-relation} gives
\[
 T_Ue_0=\frac12e_0+\frac1{8\sqrt2}e_1,
 \qquad
 T_Ue_1=\frac1{8\sqrt2}e_0+\frac1{32}e_1.
\]
Thus $E=\operatorname{span}\{e_0,e_1\}$ is invariant under $T_U$.  If
$f\in E^\perp$ and $e\in E$, then
$\langle T_Uf,e\rangle=\langle f,T_Ue\rangle=0$; hence $E^\perp$ is also
invariant.  Writing $A=T_U|_{E^\perp}$, we have
\[
 T_U=B_0\oplus A.
\]

By \Cref{lem:operator-identities}, $C_U$ represents $T_U^2$.  On the other
hand, \eqref{eq:low-block-square} and the definition of $L$ show that
$L(D_U(x),D_U(y))$ represents $B_0^2\oplus0$.  Therefore the kernel
\[
 K_U(x,y)=C_U(x,y)-L(D_U(x),D_U(y))
\]
represents $0_E\oplus A^2$, and thus $T_{K_U}$ is positive.  Equations
\eqref{eq:forced-codegree-identity} and \eqref{eq:certificate-polynomial}
now give
\[
 V_{\lambda^2}(D_U(x),D_U(y))K_U(x,y)
 =N_{\lambda^2}(D_U(x),D_U(y)),
 \qquad\text{for a.e. }(x,y)\in[0,1]^2.
\]

The function $D_U$ is bounded, and $K_U$ is a kernel.  Since
$\lambda^2\leq2^{-14}$, \Cref{lem:pole-exclusion} applies.  Its conclusion,
together with the definition of $R_z$, gives some
$0\leq b<a_{\lambda^2}$ such that $|D_U|\leq b$ almost everywhere and
\begin{equation}\label{eq:competitor-codegree}
 C_U(x,y)=R_{\lambda^2}(D_U(x),D_U(y))
 \qquad\text{for a.e. }(x,y)\in[0,1]^2.
\end{equation}
The series in \eqref{eq:codegree-series} converges uniformly and absolutely
on $[-b,b]^2$.

Put
\[
 m_n=\int_0^1P_n(D_U(x))\,\d x.
\]
Then $m_1=\int_0^1D_U(x)\,\d x=0$.  By the uniform absolute convergence
above, we may integrate \eqref{eq:competitor-codegree} term by term.
Using \Cref{lem:operator-identities}, \eqref{eq:target-path-density}, and
\eqref{eq:codegree-series}, we obtain
\[
\begin{aligned}
 \frac{33}{128}
 &=t(P_3,U)
 =\int_{[0,1]^2}C_U(x,y)\,\d x\,\d y\\
 &=\frac{33}{128}
   +\frac{17}{128}m_1
   +\frac9{512}m_1^2
   +\frac12\sum_{n=2}^{\infty}\lambda^{2n}m_n^2.
\end{aligned}
\]
Since $m_1=0$ and $\lambda>0$, it follows that
$m_n=0$ for every $n\geq1$.
Now we need the following lemma.

\begin{lem}\label[lem]{lem:chebyshev-coordinate}
Suppose that $D\colon[0,1]\to\R$ is bounded and measurable and
\[
    \int_0^1 P_n(D(x))\,\d x=0
    \qquad\text{for every }n\geq1.
\]
Then $D(x)\in[-1,1]$ for almost every $x\in[0,1]$.  After modifying $D$ on a null set, the
map
\[
    \varphi(x)=\frac{\arccos D(x)}{\pi}
\]
is measure preserving from $[0,1]$ to $[0,1]$, and
\[
    D(x)=\cos(\pi\varphi(x)),
    \qquad\text{a.e. }x\in[0,1].
\]
\end{lem}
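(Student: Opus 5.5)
The plan is to identify the distribution $\mu$ of $D$, that is, the pushforward of $\operatorname{Leb}$ under $D$, with the arcsine (Chebyshev) distribution on $[-1,1]$. This is the pushforward of $\operatorname{Leb}$ on $[0,1]$ under $\theta\mapsto\cos(\pi\theta)$. Since $D$ is bounded, $\mu$ is a compactly supported probability measure on $\R$, say supported in $[-M,M]$. Let $\nu$ be the arcsine measure, $\nu=(\cos\pi\cdot)_*\operatorname{Leb}$. The defining identity $P_n(\cos\theta)=\cos(n\theta)$ gives, for $n\geq1$,
\[
\int P_n\,\d\nu=\int_0^1\cos(n\pi\theta)\,\d\theta=0 .
\]
Because the $P_n$ form a basis of the polynomials (with $\deg P_n=n$ and $P_0=1$), the hypothesis gives $\int p\,\d\mu=\int p\,\d\nu$ for every polynomial $p$. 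Both measures are supported in the compact interval $[-M',M']$, where $M'=\max(M,1)$. By Weierstrass approximation, polynomials are dense in $C[-M',M']$, so $\mu=\nu$ as Borel measures; this is the determinate moment problem for compact support. In particular $\mu([-1,1])=1$, so $D\in[-1,1]$ almost everywhere.

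Next I modify $D$ on a null set so that $D$ takes values in $[-1,1]$ everywhere, say by setting $D=1$ where $|D|>1$. Then $\varphi=\arccos(D)/\pi$ is a measurable map into $[0,1]$, and $\cos(\pi\varphi(x))=D(x)$ holds for every $x$, since $\arccos$ inverts $\cos$ on $[0,\pi]$. It remains to check measure preservation. The map $h(u)=\arccos(u)/\pi$ is a homeomorphism $[-1,1]\to[0,1]$ inverse to $\theta\mapsto\cos(\pi\theta)$. Hence for a Borel set $A\subseteq[0,1]$,
\[
\operatorname{Leb}(\varphi^{-1}(A))=\mu(h^{-1}(A))=\nu(h^{-1}(A))=\operatorname{Leb}\bigl(\{\theta:\cos(\pi\theta)\in h^{-1}(A)\}\bigr)=\operatorname{Leb}(A).
\]
For Lebesgue measurable $A$, I sandwich $A$ between Borel sets differing by a null set and get the same conclusion.

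The only step with real content is the moment-uniqueness argument. The point requiring care is that the support of $\mu$ is not known a priori to lie in $[-1,1]$, so I must apply Weierstrass on the larger interval $[-M',M']$ rather than on $[-1,1]$. This is harmless because $\nu$ is also supported there. Everything else is bookkeeping about pushforwards.
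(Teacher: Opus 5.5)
Your proposal is correct and follows essentially the same route as the paper: you identify the pushforward of $D$ with the image of Lebesgue measure under $u\mapsto\cos(\pi u)$ by matching Chebyshev moments and applying Weierstrass approximation on an interval containing both supports, and then read off the measure preservation of $\varphi$. The only cosmetic difference is in that last step: you check all Borel sets at once via the inverse homeomorphism $h=\arccos(\cdot)/\pi$, plus a completion argument for Lebesgue sets, whereas the paper checks only the intervals $[0,t]$ and uses that these determine a Borel probability measure.
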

\begin{proof}
Let $\mu$ and $\nu$ be the images of Lebesgue measure under $D$ and under
$u\mapsto\cos(\pi u)$, respectively.  For every $n\geq1$,
\[
 \int P_n\,\d\mu=0
 =\int_0^1\cos(n\pi u)\,\d u
 =\int P_n\,\d\nu,
\]
and the same equality holds for $n=0$ because $P_0=1$.
Since $\deg P_n=n$,
the Chebyshev polynomials span the polynomial algebra.
Hence $\mu$ and $\nu$ have the same integrals against every polynomial.
Both measures are supported on some compact interval $[-M,M]$, so the Stone-Weierstrass theorem implies 
$\mu$ and $\nu$ have the same integrals for every continuous function on $[-M,M]$. Thus $\mu=\nu$.

Since $\nu$ is supported on $[-1,1]$, we have $D(x)\in[-1,1]$ almost
everywhere.
Modify $D$ on the exceptional null set and put
$\varphi(x)=\arccos(D(x))/\pi$.
For $0\leq t\leq1$, the monotonicity of
$u\mapsto\cos(\pi u)$ gives
\[
 \operatorname{Leb}(\varphi^{-1}([0,t]))
 =\mu([\cos(\pi t),1])
 =\nu([\cos(\pi t),1])
 =t.
\]
The intervals $[0,t]$ determine Borel probability measures on $[0,1]$, so
$\varphi$ is measure preserving.
By construction, $D(x)=\cos(\pi\varphi(x))$ for every $x\in[0,1]$.
\end{proof}

Applying \Cref{lem:chebyshev-coordinate} to $D_U$, and modifying $D_U$ on a null set if necessary, we obtain a measure-preserving map
\[
    \varphi(x)=\frac{\arccos D_U(x)}{\pi}
\]
such that
\[
    D_U(x)=\cos(\pi\varphi(x)),
    \qquad\text{a.e. }x\in[0,1].
\]
Consequently, \eqref{eq:target-codegree} and
\eqref{eq:competitor-codegree} give
\[
\begin{aligned}
 C_U(x,y)
 &=R_{\lambda^2}(D_U(x),D_U(y))\\
 &=R_{\lambda^2}(\cos(\pi\varphi(x)),\cos(\pi\varphi(y)))\\
 &=C_{W_\lambda}(\varphi(x),\varphi(y)),
\end{aligned}
\qquad\text{a.e. }(x,y)\in[0,1]^2.
\]

Since $C_3\in\mathscr F$, we have
$t(C_3,U)=t(C_3,W_\lambda)$, and $T_{W_\lambda}$ is positive by \Cref{prop:target-properties}.
Hence \Cref{lem:positive-square-root} yields
\[
    U(x,y)=W_\lambda(\varphi(x),\varphi(y)),
    \qquad\text{a.e. }(x,y)\in[0,1]^2.
\]
Thus $U$ agrees almost everywhere with the pullback $W_\lambda^\varphi$.
Since $\varphi$ is measure preserving, $U$ is weakly isomorphic to $W_\lambda$.
This proves the forcing assertion for every bounded symmetric real-valued kernel.

Finally, note that
\[
    \frac{d}{d\lambda}
    \left(
      \frac{4505}{32768}
      +\frac{\lambda^6}{8(1-\lambda^3)}
    \right)
    =
    \frac{3\lambda^5(2-\lambda^3)}
         {8(1-\lambda^3)^2}
    >0.
\]
Thus \eqref{eq:target-triangle-density} distinguishes the weak-isomorphism classes of
$W_\lambda$ for different $\lambda\in (0,1/128]$.
\end{proof}

\section*{AI Declaration}

ChatGPT-5.6 sol, developed by OpenAI, was used during the early stages of this project and in the preparation of the manuscript.
In particular, the core idea underlying the proof of the main theorem was proposed by ChatGPT.
ChatGPT was also used to assist with the language, organization, and presentation of the manuscript.
The author subsequently developed and verified all mathematical arguments and takes full responsibility for all statements, proofs, citations, and conclusions presented in this paper.

\section*{Acknowledgements}

The author would like to thank Daniel Kr\'al' for the helpful discussion and suggestions on an earlier version of the manuscript.

\bibliographystyle{amsplain}
\bibliography{references}

@article{ChungGrahamWilson1989,
  author   = {Chung, Fan R. K. and Graham, Ronald L. and Wilson, Richard M.},
  title    = {Quasi-random graphs},
  journal  = {Combinatorica},
  volume   = {9},
  number   = {4},
  pages    = {345--362},
  year     = {1989},
  mrnumber = {1054011}
}

@article{LovaszSos2008,
  author   = {Lov{\'a}sz, L{\'a}szl{\'o} and S{\'o}s, Vera T.},
  title    = {Generalized quasirandom graphs},
  journal  = {J. Combin. Theory Ser. B},
  volume   = {98},
  number   = {1},
  pages    = {146--163},
  year     = {2008},
  mrnumber = {2368030}
}

@article{LovaszSzegedy2011,
  author   = {Lov{\'a}sz, L{\'a}szl{\'o} and Szegedy, Bal{\'a}zs},
  title    = {Finitely forcible graphons},
  journal  = {J. Combin. Theory Ser. B},
  volume   = {101},
  number   = {5},
  pages    = {269--301},
  year     = {2011},
  mrnumber = {2802882}
}

@article{GlebovKralVolec2019,
  author   = {Glebov, Roman and Kr{\'a}l', Daniel and Volec, Jan},
  title    = {Compactness and finite forcibility of graphons},
  journal  = {J. Eur. Math. Soc.},
  volume   = {21},
  number   = {10},
  pages    = {3199--3223},
  year     = {2019},
  mrnumber = {3994104}
}

@article{GlebovKlimosovaKral2019,
  author   = {Glebov, Roman and Klimo{\v{s}}ov{\'a}, Tereza and Kr{\'a}l', Daniel},
  title    = {Infinite-dimensional finitely forcible graphon},
  journal  = {Proc. Lond. Math. Soc. (3)},
  volume   = {118},
  number   = {4},
  pages    = {826--856},
  year     = {2019},
  mrnumber = {3938713}
}

@article{CooperKralMartins2018,
  author   = {Cooper, Jacob W. and Kr{\'a}l', Daniel and Martins, Ta{\'\i}sa L.},
  title    = {Finitely forcible graph limits are universal},
  journal  = {Adv. Math.},
  volume   = {340},
  pages    = {819--854},
  year     = {2018},
  mrnumber = {3886181}
}

@article{KralLovaszNoelSosnovec2020,
  author   = {Kr{\'a}l', Daniel and Lov{\'a}sz, L{\'a}szl{\'o} M. and Noel, Jonathan A. and Sosnovec, Jakub},
  title    = {Finitely forcible graphons with an almost arbitrary structure},
  journal  = {Discrete Anal.},
  pages    = {Paper No. 9, 36},
  year     = {2020},
  mrnumber = {4132060}
}

@book{Lovasz2012,
  author    = {Lov{\'a}sz, L{\'a}szl{\'o}},
  title     = {Large networks and graph limits},
  series    = {American Mathematical Society Colloquium Publications},
  volume    = {60},
  publisher = {American Mathematical Society},
  address   = {Providence, RI},
  pages     = {xiv+475},
  year      = {2012},
  isbn      = {978-0-8218-9085-1},
  mrnumber  = {3012035}
}

@book{Simon2005,
  author    = {Simon, Barry},
  title     = {Trace ideals and their applications},
  edition   = {Second},
  series    = {Mathematical Surveys and Monographs},
  volume    = {120},
  publisher = {American Mathematical Society,Providence, RI},
  pages     = {viii+150},
  year      = {2005},
  isbn      = {0-8218-3581-5},
  mrnumber  = {2154153}
}

@article{LovaszSzegedy2006,
  author   = {Lov{\'a}sz, L{\'a}szl{\'o} and Szegedy, Bal{\'a}zs},
  title    = {Limits of dense graph sequences},
  journal  = {J. Combin. Theory Ser. B},
  volume   = {96},
  number   = {6},
  pages    = {933--957},
  year     = {2006},
  mrnumber = {2274085}
}

@article{BorgsChayesLovaszSosVesztergombi2008,
  author   = {Borgs, Christian and Chayes, Jennifer T. and  Lov{\'a}sz, L{\'a}szl{\'o} and S{\'o}s, Vera T. and Vesztergombi, Katalin},
  title    = {Convergent sequences of dense graphs. I. Subgraph frequencies, metric properties and testing},
  journal  = {Adv. Math.},
  volume   = {219},
  number   = {6},
  pages    = {1801--1851},
  year     = {2008},
  mrnumber = {2455626}
}

@incollection{Thomason1987,
  author    = {Thomason, Andrew},
  title     = {Pseudorandom graphs},
  booktitle = {Random graphs '85 ({Pozna\'n}, 1985)},
  series    = {North-Holland Math. Stud.},
  volume    = {144},
  pages     = {307--331},
  publisher = {North-Holland,Amsterdam},
  year      = {1987},
  mrnumber  = {930498}
}

@article{CooperKaiserKralNoel2018,
  author   = {Cooper, Jacob W. and Kaiser, Tom{\'a}{\v{s}} and Kr{\'a}l', Daniel and Noel, Jonathan A.},
  title    = {Weak regularity and finitely forcible graph limits},
  journal  = {Trans. Amer. Math. Soc.},
  volume   = {370},
  number   = {6},
  pages    = {3833--3864},
  year     = {2018},
  mrnumber = {3811511}
}

@incollection{GrzesikKral2019,
  author    = {Grzesik, Andrzej and Kr{\'a}l', Daniel},
  title     = {Analytic representations of large graphs},
  booktitle = {Surveys in combinatorics 2019},
  series    = {London Math. Soc. Lecture Note Ser.},
  volume    = {456},
  pages     = {57--88},
  publisher = {Cambridge University Press},
  address   = {Cambridge},
  year      = {2019},
  mrnumber  = {3967292}
}

@article{GrzesikKralPikhurko2024,
  author   = {Grzesik, Andrzej and Kr{\'a}l', Daniel and Pikhurko, Oleg},
  title    = {Forcing generalised quasirandom graphs efficiently},
  journal  = {Combin. Probab. Comput.},
  volume   = {33},
  number   = {1},
  pages    = {16--31},
  year     = {2024},
  mrnumber = {4680487}
}

@book{MasonHandscomb2003,
  author    = {Mason, John C. and Handscomb, David C.},
  title     = {Chebyshev polynomials},
  publisher = {Chapman \& Hall/CRC},
  address   = {Boca Raton, FL},
  pages     = {xiii+341},
  year      = {2003},
  isbn      = {978-0-8493-0355-5},
  mrnumber  = {1937591}
}

@article{Szablowski2019,
  author   = {Szab\l owski, Pawe\l J.},
  title    = {On probabilistic aspects of Chebyshev polynomials},
  journal  = {Statist. Probab. Lett.},
  fjournal = {Statistics \& Probability Letters},
  volume   = {145},
  year     = {2019},
  pages    = {205--215},
  issn     = {0167-7152,1879-2103},
  mrclass  = {60E05 (41A50 42A16)},
  mrnumber = {3873908},
}

\bigskip
\noindent
\textit{Junchi Zhang, Shanghai Center for Mathematical Sciences,
Fudan University, Shanghai, China}\\
\textit{Email:} \texttt{jczhang24@m.fudan.edu.cn}

\end{document}